\def\section{\@startsection{section}{1}%
  \z@{1.1\linespacing\@plus\linespacing}{.8\linespacing}%
  {\normalfont\Large\scshape\centering}}
\theoremstyle{plain}
\newtheorem*{conj*}{Root Groups Conjecture}
\newtheorem*{thm1.2}{(1.2) Theorem}
\newtheorem*{thm1.3}{(1.3) Theorem}
\newtheorem*{thm1.4}{(1.4) Theorem}
\newtheorem*{prop*}{Proposition}
\newtheorem{prop}{Proposition}[section]
\newtheorem{thm}[prop]{Theorem}
\newtheorem{cor}[prop]{Corollary}
\newtheorem{lemma}[prop]{Lemma}
\theoremstyle{definition}
\newtheorem*{Def*}{Definition}
\newtheorem*{notation*}{Notation}
\newtheorem{remark}[prop]{Remark}
\newcommand{\cala}{\mathcal{A}}
\newcommand{\calb}{\mathcal{B}}
\newcommand{\ff}{\mathbb{F}}
\newcommand{\frakA}{\mathfrak{A}}
\newcommand{\ga}{\alpha}
\newcommand{\gd}{\delta}
\newcommand{\gD}{\Delta}
\newcommand{\gl}{\lambda}
\newcommand{\gvp}{\varphi}
\newcommand{\gs}{\sigma}
\newcommand{\gt}{\tau}
\newcommand{\sminus}{\smallsetminus}
\newcommand{\lan}{\langle}
\newcommand{\ran}{\rangle}
\newcommand{\Aut}{{\rm Aut}}
\newcommand{\half}{\textstyle{\frac{1}{2}}}
\newcommand{\e}{\mathbb{1}}
\numberwithin{equation}{section}
\begin{document}
\title[]{On primitive axial algebras of Jordan type} 
\author[J.~I.~Hall,\quad  Y.~Segev,\quad S.~Shpectorov]{J.I.~Hall\qquad  Y.~Segev\qquad S.~Shpectorov }
\address{Jonathan, I.~Hall\\
        Department of Mathematics\\
				Michigan State University\\
				Wells Hall, 619 Red Cedar Road, East Lansing, MI 48840\\
				United States}
\email{jhall@math.msu.edu}

\address{Yoav Segev \\
         Department of Mathematics \\
         Ben-Gurion University \\
         Beer-Sheva 84105 \\
         Israel}
\email{yoavs@math.bgu.ac.il}
\address{Sergey Shpectorov\\
         School of Mathematics\\
				 University of Birmingham\\
				Watson Building, Edgbaston, Birmingham, B15 2TT\\
				United Kingdom}
\email{s.shpectorov@bham.ac.uk}

\keywords{Axial algebra, 3-transposition, Jordan algebra, Frobenius form}
\subjclass[2010]{Primary: 17A99; Secondary: 17C99, 17B69}

\begin{abstract}
In this note we give an overview of our knowledge
regarding primitive axial algebras of Jordan type half and
connections between $3$-transposition groups and Matsuo algebras.
We also show that primitive axial algebras of Jordan type $\eta$
admit a Frobenius form, for any $\eta$.
\end{abstract}

 \dedicatory{Dedicated to Professor Robert L.~Griess, Jr.~on the occasion
     of his $71$st birthday}

\date{May 9, 2017}
\maketitle
\section{Introduction}

The purpose of this note is threefold.  In \S 2 we give an overview of our knowledge
regarding {\it primitive axial algebras of Jordan type half}. This is taken
from \cite{hss}.  In fact we focus in \S 2
on one of the main results in \cite{hss} which characterizes
Jordan algebras of Clifford type amongst primitive axial algebras of Jordan type half. 
The primitive axial algebras of Jordan type $\eta\ne\half$ are reviewed (amongst other things) by Jon Hall
in another paper of this volume.  In \S 3, we complete, for the case $\eta=\half,$ a result
connecting $3$-transposition groups  
and Matsuo algebras, established in \cite[Theorem 6.3]{hrs} for $\eta\ne\half$. 
In \S 4 we show that any primitive axial algebra of Jordan type $\eta$ (any $\eta$) 
admits a Frobenius form.

We start by recalling a few definitions.  We do not give the historical background 
as it can be best found in the introduction to \cite{hrs}.

All algebras $A$ in this note are {\bf commutative, non-associative} over
a field $\ff$ of {\bf characteristic not $2$}.  

For $a\in A$ the {\bf adjoint operator} ${\rm ad}_a$
is {\bf multiplication by $a$}, so
\[
{\rm ad}_a\colon A\to A,\ x\mapsto xa.
\]
An {\bf axis} in $A$ is, by definition, a {\bf semisimple idempotent}, i.e., an idempotent
whose minimal ad-polynomial 
has few distinct linear factors; where the minimal ad-polynomial is
the minimal polynomial of the linear operator ${\rm ad}_a$ (we are {\bf not} assuming
that $A$ is finite dimensional,  however, we are assuming that ${\rm ad}_a$ has 
a minimal polynomial). 
  
{\bf Axial algebras},
introduced recently by Hall, Rehren and Shpectorov ([HRS]),  
are, by definition, algebras   generated by axes.
When certain
{\bf fusion rules}, i.e.~multiplication rules,
between the eigenspaces corresponding to an axis,
are imposed the structure of axial 
algebras remains interesting  yet it is more rigid.  

Given  an element  $a\in A$ and a scalar $\lambda \in \ff,$ 
the $\gl$-eigenspace of ${\rm ad}_a$ is denoted $A_{\gl}(a),$ so:
\[
A_\lambda(a) := \{x \in A\ |\ xa = \lambda x\}\,.
\]
(We allow $A_\lambda(a)=0$.)
\medskip

\noindent
{\bf Axial algebras of Jordan type $\eta,$ where $\eta\notin\{0,1\}$ is fixed}, are 
algebras generated by a set of axes $\cala$ such that for each $a\in\cala:$
\begin{enumerate}
\item
The minimal ad-polynomial of $a$ divides $(x-1)x(x -\eta)$. 
 \item
The  {\bf fusion rules} imitate the {\bf Peirce multiplication rules} in Jordan algebras.
These fusion rules are:
\[
A_1 (a)A_1 (a) \subseteq A_1 (a)\quad\text{and}\quad A_0(a)A_0(a)\subseteq A_0(a),
\]
\[
A_1(a)A_0(a)=\{0\},
\]
\[
(A_0(a)+A_1(a))A_{\eta}(a)\subseteq A_{\eta}(a),\quad\text{and}\quad A_{\eta}(a)^2\subseteq A_0(a)+A_1(a).
\]
In particular, if we set
\[
A_+ (a) = A_1 (a) \oplus A_0(a)
\quad\text{and}\quad 
A_- (a) = A_\eta (a).
\]
then
\[
A_\delta (a)A_\epsilon (a) \subseteq A_{\delta\epsilon} (a)\,,
\]
for $\delta, \epsilon \in \{+,-\}.$
\end{enumerate}
Thus, for example, Jordan algebras are axial algebras of Jordan type $\half,$ provided
that they are generated by idempotents.

An axis $a \in A$ is {\bf absolutely primitive} if $A_1(a) = \ff a$
(this is stronger than the usual notion of primitivity). 
We call an absolutely primitive axis $a$ satisfying (1), (2) above  an {\bf $\eta$-axis}.

A {\bf primitive axial algebra of Jordan type $\eta$} is 
an algebra generated by $\eta$-axes. 
For $\eta \neq \half,$ primitive axial algebras of Jordan type $\eta$ were thoroughly analyzed by Hall, Rehren, and
Shpectorov in \cite{hrs}.
The case $\eta=\half$, is much less
understood and is of a different nature. This case is the focus
of \cite{hss} and of \S\S 2,3 of this note.

Given an $\eta$-axis  $a\in A,$ recall that
\[
A=\overbrace{A_1(a)\oplus A_0(a)}^{A_+(a)}\oplus \overbrace{A_{\eta}(a)}^{A_{-}(a)}.
\]
The map $\gt(a)\colon A\to A$
defined by $x^{\gt(a)}=x_{+}-x_{-},$ where $x=x_+ + x_{-}\in A_+(a)+A_{-}(a),$
is an automorphism of $A$ of order $1$ or $2$.
It is called {\bf the Miyamoto involution corresponding to $a$}.
\medskip

\subsection{Jordan algebras of Clifford type}\hfill
\medskip

A Jordan algebra of Clifford type $J(V,B)$ consists of the following information:
\begin{enumerate}
\item
A vector space $V$ over $\ff$ together with a symmetric bilinear form
$B$ on $V$.  The corresponding quadratic form is denoted $q(v)=B(v,v)$.

\item
The Jordan algebra $J(V,B)$ is $\ff\e\oplus V$ with multiplication
defined by 
\[
\e\text{ is the identity and}\ v\ast w=B(v,w)\e,\quad\forall v,w\in V.
\]
\end{enumerate}
 
\noindent
The algebra $J(V,B)$ comes from the associative {\bf Clifford algebra ${\rm Cl}(V,q)$}:
it is a sub-Jordan algebra of ${\rm Cl}(V,q)^+,$ where, as usual, $\frakA^+$ denotes
the special Jordan algebra that emerges from the associative algebra $\frakA.$  

Let $J=J(V,B)$. It is easy to check that:

\begin{itemize}
\item[(a)]
For $u\in V$ and $\ga\in \ff$, the element $\ga \e+u$ is an idempotent 
if and only if $\ga=\half$ and $q(u)=\frac{1}{4}$.

\item[(b)]
Assume that $a=\half \e+u$ is an idempotent in $J$.  Then
\begin{itemize}
\item[(i)]
 $J_1(a)=\ff a,$ so $a$ is a $\half$-axis.
(Thus $J(V,B)$ is a primitive axial algebra of Jordan type $\half$ iff
it is generated by idempotents.)

\item[(ii)]
$J_0(a)=\ff(\e-a)$  (of course $\e-a$ is a $\half$-axis), and

\item[(iii)]
$J_{\half}(a) =u^\perp=J_{\half}(\e-a),$ 
where $u^\perp=\{v\in V\mid B(u,v)=0\}$.
\end{itemize}
\item[(c)]
It follows that
$\gt(a)=\gt(\e-a),$ for any $\half$-axis $a$.
\end{itemize}
The purpose of \S2 is to show that property (c) above
essentially characterizes Jordan algebras of Clifford type
amongst primitive axial algebras of Jordan type $\half$.

\section{Primitive axial algebras of Jordan type half}

Throughout this section $A$ is a primitive axial algebra of Jordan type $\eta,$
generated by a set $\cala$ of $\eta$-axes.

Let $\gD$ be the graph
on the set of all  $\eta$-axes of $A,$ where distinct $a, b$ form
an edge iff $ab\ne 0$.  Let also $\gD_{\cala}$ be the full subgraph of $\gD$
on the set $\cala$.
The purpose of this section is to sketch a proof of the following theorem: 

\begin{thm}\label{thm main}
Assume that $\gD_{\cala}$ is connected and that there are two distinct
$\eta$-axes $a, b\in A$ such that $\gt(a)=\gt(b)$.
Then $\eta=\half,\ a+b=\e$ is the identity of $A,$   
and $A$ is a Jordan algebra of Clifford type.
\end{thm}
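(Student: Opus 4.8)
The plan is to first convert the coincidence $\gt(a)=\gt(b)$ into a completely explicit description of $A$, and only afterwards pin down $\eta$ and the product. Set $\gt:=\gt(a)=\gt(b)$. Comparing the $(\pm1)$-eigenspaces of $\gt$ gives $A_\eta(a)=A_\eta(b)=:V$ and $A_1(a)\oplus A_0(a)=A_1(b)\oplus A_0(b)=:A^+$. As $\gt$ fixes $a$ we have $a\in A^+=\ff b\oplus A_0(b)$; writing $a=\gl b+a_0$ with $a_0\in A_0(b)$ and squaring, directness forces $\gl\in\{0,1\}$, and $\gl=1$ would give $ab=b$, hence $b\in A_1(a)=\ff a$ and $a=b$, which is excluded. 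So $ab=0$. Putting $W:=A_0(a)\cap A_0(b)$, one checks (for $x\in A_0(a)\subseteq A^+$ the $A_\eta(b)$-component of $x$ vanishes, so $x$ lies in $\ff b\oplus A_0(b)$ with $\ff b$-part a multiple of $b\in A_0(a)$) that $A_0(a)=\ff b\oplus W$ and, symmetrically, $A_0(b)=\ff a\oplus W$. Thus
\[
A=\ff a\oplus\ff b\oplus W\oplus V,\qquad A^+=\ff a\oplus\ff b\oplus W,
\]
with $a,b$ orthogonal idempotents, $aW=bW=0$, $aV=bV=\eta V\subseteq V$, $WW\subseteq W$, $WV\subseteq V$, $VV\subseteq\ff a\oplus\ff b\oplus W$, and $\gt$ the grading automorphism that is $+1$ on $A^+$ and $-1$ on $V$.

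Next I would show $V\ne0$ and pick out a twisted generating axis. If $V=0$ then every $\eta$-axis lies in $A^+$; writing such an axis as $\ga a+\gb b+w$, idempotency gives $\ga,\gb\in\{0,1\}$ and $w$ idempotent, and absolute primitivity then forces the axis to be $a$, $b$, or an idempotent of $W$. Since $a$, $b$ and the idempotents of $W$ are mutually orthogonal, whichever of $a,b$ lies in $\cala$ is then isolated in $\gD_{\cala}$ (and if neither lies in $\cala$, then $\langle\cala\rangle$ sits inside $W\subseteq A_0(a)$ and cannot contain $a$), contradicting connectedness of $\gD_{\cala}$ together with $A=\langle\cala\rangle\ni a\ne b$. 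Hence $V\ne0$, and since $A=\langle\cala\rangle\not\subseteq A^+$ some $c\in\cala$ has nonzero $V$-component; write $c=\ga a+\gb b+w_c+v_c$ with $v_c\ne0$. Reading $c^2=c$ on the $V$-summand gives $2\,w_cv_c=(1-2\eta(\ga+\gb))v_c$; substituting this back yields $c\,v_c=\tfrac12 v_c+v_c^2$, and the remaining components of $c^2=c$ give $v_c^2=\ga(1-\ga)a+\gb(1-\gb)b+(w_c-w_c^2)$.

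The heart of the argument — and the step I expect to be the main obstacle — is to conclude from here that $\eta=\half$ and $W=0$. I would use: that ${\rm ad}_c$ is semisimple with spectrum in $\{0,1,\eta\}$ (the polynomial $(x-1)x(x-\eta)$ being squarefree, as $\eta\notin\{0,1\}$) and $A_1(c)=\ff c$; the Frobenius form $F$ of \S4, which is associative, makes $\ff a,\ff b,W,V$ pairwise orthogonal, has $F(c,c)=F(a,a)=F(b,b)$ because $ca=\ga a+\eta v_c\ne0$ and $cb=\gb b+\eta v_c\ne0$, so $c$ is joined to both $a$ and $b$ in $\gD$ (and adjacent axes have equal length), and projects onto $A_1(c)=\ff c$ by $x\mapsto F(c,x)F(c,c)^{-1}c$; and connectedness of $\gD_{\cala}$. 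Suppose $\eta\ne\half$. In the ${\rm ad}_c$-eigendecomposition $a=a^{(1)}+a^{(0)}+a^{(\eta)}$ the $\ff c$-component is $a^{(1)}=F(c,a)F(c,c)^{-1}c=\ga c$, so $\eta\,a^{(\eta)}={\rm ad}_c(a)-\ga c=(\ga-\ga^2)a-\ga\gb b-\ga w_c+(\eta-\ga)v_c$; applying ${\rm ad}_c$ to this $\eta$-eigenvector and comparing $v_c$-coefficients forces $\ga(2\eta-1)=\tfrac\eta2(2\eta-1)$, hence $\ga=\tfrac\eta2$, and the same computation with $b$ in place of $a$ gives $\gb=\tfrac\eta2$; comparing $W$-coefficients then gives $2w_c^2=(1+\eta)w_c$. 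Now $F(v_c,v_c)$ and $F(w_c,w_c)$ are determined by $F(c,c)=F(a,a)$ together with $F(v_c^2,a)=\eta F(v_c,v_c)$ and $\ga=\gb=\tfrac\eta2$; evaluating $F(v_c^2,w_c)$ in two ways — directly, and as $F(v_c,v_cw_c)=\tfrac12(1-2\eta^2)F(v_c,v_c)$ — the resulting identity collapses to $\eta^2=0$, a contradiction. Hence $\eta=\half$. It then remains to show $W=0$: for $\eta=\half$ the same scheme (absolute primitivity of $c$ and of the other axes of $\cala$, the Frobenius identities above, and connectedness of $\gD_{\cala}$ to pass between axes) forces $w_c=0$ for every twisted axis, while an axis of $\cala$ lying in $A^+$ is $a$ or $b$; so $\cala\subseteq\ff a\oplus\ff b\oplus V$, and since $VV\subseteq\ff a\oplus\ff b$ there, this subspace is a subalgebra containing $\cala$, hence equals $A$ and $W=0$. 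Then $\e:=a+b$ is idempotent with $\e a=a$, $\e b=b$ and $\e v=2\eta v=v$ for $v\in V$, so $\e$ is the identity of $A$.

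It remains to recognise $A$. Put $u:=\half(a-b)$, so $a=\half\e+u$, $b=\half\e-u$ and $A=\ff\e\oplus V'$ with $V':=\ff u\oplus V$. From the product table $u^2=\tfrac14(a+b)=\tfrac14\e$, and $uv=\half(av-bv)=0$ for $v\in V$; for $v,v'\in V$, writing $vv'=\pi_a(vv')a+\pi_b(vv')b$ and using associativity of $F$, $\pi_a(vv')F(a,a)=F(vv',a)=F(v,\tfrac12 v')=\pi_b(vv')F(b,b)$, so $\pi_a(vv')=\pi_b(vv')$ (since $F(a,a)=F(b,b)$) and $vv'\in\ff(a+b)=\ff\e$. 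Hence there is a symmetric bilinear form $B$ on $V'$ with $B(u,u)=\tfrac14$, $B(u,V)=0$, $B(v,v')=\tfrac12 F(v,v')F(a,a)^{-1}$, and $ww'=B(w,w')\e$ for all $w,w'\in V'$. That is, $A=\ff\e\oplus V'$, with $\e$ the identity and $w\ast w'=B(w,w')\e$, is precisely $J(V',B)$, a Jordan algebra of Clifford type (and $V'\ne0$ as $A$ is generated by $\eta$-axes). This proves the theorem.
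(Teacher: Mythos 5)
Your opening reductions are correct and, as far as they go, follow a genuinely different route from the paper: you work directly with the decomposition $A=\ff a\oplus\ff b\oplus W\oplus V$ (where $W=A_0(a)\cap A_0(b)$, $V=A_\eta(a)=A_\eta(b)$), get $ab=0$ and $V\ne 0$ from connectedness, and then eliminate $\eta\ne\half$ by a self-contained Frobenius-form computation instead of quoting [HRS, Proposition 6.5] as the paper does. That elimination does check out (there is no circularity in using \S 4, which is independent of \S 2): for a twisted axis $c=\ga a+\gb b+w_c+v_c$ one indeed gets $\ga=\gb=\frac\eta2$, $2w_c^2=(1+\eta)w_c$, $F(v_c,v_c)=\frac{2-\eta}{4}$, $F(w_c,w_c)=\frac{2+\eta-2\eta^2}{4}$, and the two evaluations of $F(v_c^2,w_c)$ force $(1-\eta)(2+\eta-2\eta^2)=(1-2\eta^2)(2-\eta)$, i.e.\ $\eta^2=0$, a contradiction. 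The final identification of $A$ with $J(V',B)$, \emph{granted} $W=0$ and $\cala\subseteq\ff a\oplus\ff b\oplus V$, is also fine and replaces the paper's appeal to Theorem \ref{thm jvb}.

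The genuine gap is exactly the step you flag with ``the same scheme \dots forces $w_c=0$'': for $\eta=\half$ your scheme demonstrably does not force this, and this is where the real difficulty of the theorem sits. When $\eta=\half$ the $v_c$-coefficient equation becomes $0=0$, and the remaining local constraints you have (idempotency of $c$, semisimplicity of ${\rm ad}_c$ applied to $a$ and $b$ via $\gvp_c(a)=\ga$, $\gvp_c(b)=\gb$, absolute primitivity of $c$, and all the Frobenius identities) only yield $\ga(1-\ga-\gb)=\gb(1-\ga-\gb)=0$ and $w_c^2=(1-\ga)w_c=(1-\gb)w_c$. For instance the configuration $\ga=\gb=\half$, $w_c\ne 0$, $w_c^2=\half w_c$, $w_cv_c=0$, $v_c^2=\frac14(a+b)+\half w_c$ passes every test on your list: both evaluations of $F(v_c^2,w_c)$ give $0$, and the conclusion $F(w_c,w_c)=0$ is no contradiction since the form may be isotropic or degenerate; inside ${\rm Span}(a,b,w_c,v_c)$ the $1$-eigenspace of ${\rm ad}_c$ is exactly $\ff c$, so primitivity is not violated either. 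The case $\ga=\gb=0$, $w_c^2=w_c$, $w_cv_c=\half v_c$, $v_c^2=0$ likewise survives. Moreover, your claim that an axis of $\cala$ lying in $A^+$ must be $a$ or $b$ overlooks idempotent axes lying inside $W$, which absolute primitivity alone does not exclude. Killing these configurations requires global input -- the interplay of $c$ with $c^{\gt(a)}$ and with the other axes of the closed generating set, i.e.\ the two-generated subalgebra analysis -- which is precisely the content of Proposition \ref{prop ta=tb} (the ``hardest case'' of [HSS, Proposition 6.5], where $N_{a,c}$ and $N_{b,c}$ are both $3$-dimensional but meet in a $2$-dimensional subalgebra) together with Theorem \ref{thm jvb}. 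As written, your proposal proves the theorem only modulo that missing argument.
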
  
 
In the remainder of this section we will sketch a
proof of Theorem \ref{thm main}.  
First we need  a theorem
that enables us to identify $A$
as a Jordan algebra of Clifford type in the case $\eta=\half$.

\begin{thm}\label{thm jvb}
Let $\eta=\half$. Assume that
$A$ contains two  $\half$-axes $a, b\in\cala$
such that $a+b=\e_A$   and
such that $v_av_c\in\ff\e_A,$ for all $c\in\cala,$ where $v_c=c-\half\e_A$.
Then  $A$ is a Jordan algebra of Clifford type.
\end{thm}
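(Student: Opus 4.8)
The plan is to realize $A$ concretely as a Jordan algebra of Clifford type $J(V,B)$. Write $\e:=\e_A$; for an axis $x$ let $\pi^x_1,\pi^x_0,\pi^x_{1/2}$ be the Peirce projections onto $A_1(x),A_0(x),A_{1/2}(x)$, drop the superscript when $x=a$, and set $\pi_+:=\pi_1+\pi_0$ (projection onto $A_+(a)$). Applying $\mathrm{ad}_a$ to $\e=a+b$ gives $a\in A_1(a)$, $b\in A_0(a)$, hence $\e\in A_+(a)$ and $v_a=a-\tfrac12\e=\tfrac12(a-b)\in A_+(a)$, with $v_a^2=a^2-a+\tfrac14\e=\tfrac14\e$ and $v_b=-v_a$. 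Now fix $c\in\cala$. Since $v_a$ acts as $0$ on $A_{1/2}(a)$, as $\tfrac12$ on $A_1(a)$ and as $-\tfrac12$ on $A_0(a)$, decomposing $v_c$ in the Peirce frame of $a$ yields $v_av_c=\tfrac12\pi_1(v_c)-\tfrac12\pi_0(v_c)\in A_1(a)\oplus A_0(a)$; comparing with the hypothesis $v_av_c=\mu_c\e=\mu_c a+\mu_c b$ (note $a\in A_1(a)$, $b\in A_0(a)$) forces $\pi_1(v_c)=2\mu_c a$ and $\pi_0(v_c)=-2\mu_c b$, so
\[
v_c=4\mu_c\,v_a+u_c,\qquad u_c:=\pi_{1/2}(v_c)\in A_{1/2}(a),\qquad v_au_c=0 .
\]
Expanding $\tfrac14\e=v_c^2=16\mu_c^2v_a^2+8\mu_cv_au_c+u_c^2$ also gives $u_c^2=\bigl(\tfrac14-4\mu_c^2\bigr)\e\in\ff\e$, and $\pi_+(c)=\tfrac12\e+4\mu_cv_a\in\ff\e+\ff v_a$; in particular the $A_0(a)$-part of every generator lies on the line $\ff b$.

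Put $U:=\sum_{c\in\cala}\ff u_c\subseteq A_{1/2}(a)$ and $V:=\ff v_a+U$; since $a\in\cala$ and $u_c=v_c-4\mu_cv_a$, in fact $V=\sum_{c\in\cala}\ff v_c$. Everything hinges on the claim
\[
v_cv_d\in\ff\e\qquad\text{for all }c,d\in\cala,\qquad(\ast)
\]
which, because $v_cv_d=16\mu_c\mu_dv_a^2+4\mu_cv_au_d+4\mu_du_cv_a+u_cu_d=4\mu_c\mu_d\e+u_cu_d$, is equivalent to $u_cu_d\in\ff\e$. Granting $(\ast)$: since $\e$ is the identity, $v_a^2\in\ff\e$ and $v_aU=0$, the subspace $\ff\e+V$ is closed under multiplication, and as it contains $\cala$ it equals $A$; the sum is direct because $a,b$ are linearly independent (distinct nonzero idempotents) whereas $V\subseteq\ff(a-b)\oplus A_{1/2}(a)$ avoids $\e=a+b$. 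Thus $A=\ff\e\oplus V$ with $vw\in\ff\e$ for all $v,w\in V$, and defining $B$ by $vw=B(v,w)\e$ exhibits $A$ as $J(V,B)$.

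It remains to prove $(\ast)$, which I expect to be the main obstacle. If $cd=0$, write $c=\pi_+(c)+u_c$ and $d=\pi_+(d)+u_d$ with $\pi_+(c),\pi_+(d)\in\ff\e+\ff v_a$; using $v_au_c=v_au_d=0$ one finds $\pi_+(c)u_d=\tfrac12u_d$ and $\pi_+(d)u_c=\tfrac12u_c$, so the $A_{1/2}(a)$-component of $cd$ equals $\tfrac12(u_c+u_d)$, whence $cd=0$ forces $u_d=-u_c$ and $u_cu_d=-u_c^2\in\ff\e$. If $cd\ne0$, I would invoke the standard fact that in a primitive axial algebra of Jordan type the subalgebra generated by two axes is spanned by $c$, $d$ and $cd$ (so is at most three-dimensional) and run through the possibilities for $\langle c,d\rangle$ --- two-dimensional, or three-dimensional of Matsuo type (i.e.\ $3C(\tfrac12)$ or a quotient of it) --- using $c^2=c$, $d^2=d$, $v_au_c=v_au_d=0$, $u_c^2,u_d^2\in\ff\e$ and the fusion rules at $a$ to force $u_cu_d\in\ff\e$ in each case. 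Pinning the $v_a$-component of $u_cu_d$ to zero --- equivalently, upgrading $w^2\in\ff\e$ from the spanning vectors $w=u_c$ to all $w\in U$ --- without already knowing that $A$ is a Jordan algebra is the delicate point; a Frobenius form on $A$, were one available in advance, would give $(\ast)$ immediately, but here it must be produced by hand.
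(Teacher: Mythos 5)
The paper itself does not prove this theorem (it defers to \cite[Theorem 5.4]{hss}), so your argument has to stand on its own, and as written it does not: it contains a genuine, and indeed self-acknowledged, gap at the crux. Your reduction is correct as far as it goes --- the Peirce analysis at $a$ giving $v_c=4\mu_c v_a+u_c$ with $u_c\in A_{\half}(a)$, the identity $u_c^2=(\tfrac14-4\mu_c^2)\e$, the treatment of the case $cd=0$, and the final assembly of $A=\ff\e\oplus V$ with $B$ defined by $vw=B(v,w)\e$ are all fine --- but the claim $(\ast)$, i.e.\ $v_cv_d\in\ff\e$ for \emph{all} $c,d\in\cala$, is essentially equivalent to the theorem itself, and for $cd\neq 0$ you only describe a case analysis you ``would'' run and then concede that forcing the $v_a$-component of $u_cu_d$ to vanish is unresolved. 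That is not a routine verification left to the reader; it is the entire content of the statement. Moreover the case list you propose is not right: for $\eta=\half$ the $3$-dimensional $2$-generated subalgebras form the one-parameter family $B(\half,\gvp)$ of Proposition \ref{prop 3dim}, so ``$3C(\half)$ or a quotient of it'' does not exhaust the possibilities.

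The gap is closable with the tools already quoted in this paper, which is the fair benchmark for what is missing. First, primitivity of $b$ pins down the whole frame: for $x\in A_0(a)$ one has $x=\e x=(a+b)x=bx$, so $x\in A_1(b)=\ff b$; hence $A=\ff\e\oplus\ff v_a\oplus A_{\half}(a)$ and, by the fusion rules, $u_cu_d\in\ff\e+\ff v_a$ automatically, with only the $v_a$-coefficient in question. Now take $c,d\in\cala$ with $cd\neq0$. If $N_{c,d}$ is $2$-dimensional then $cd=\half c+\half d$ and $v_cv_d=\tfrac14\e$. If $N_{c,d}$ is $3$-dimensional, Proposition \ref{prop 3dim} gives $v_cv_d=\gs_{c,d}+\tfrac14\e$ and $\gs_{c,d}c=\pi c$; writing $\gs_{c,d}=s\e+\beta v_a$ and using $v_ac=\mu_c\e+\half v_a$, the equation $(s-\pi)c+\beta\mu_c\e+\tfrac{\beta}{2}v_a=0$ decomposes along $\ff\e\oplus\ff v_a\oplus A_{\half}(a)$: the $A_{\half}(a)$-component gives $(s-\pi)u_c=0$, so if $u_c\neq0$ then $s=\pi$ and the $v_a$-component forces $\beta=0$, i.e.\ $\gs_{c,d}\in\ff\e$; while if $u_c=0$ then $v_c\in\ff v_a$ and $v_cv_d\in\ff\e$ directly from the hypothesis applied to $d$. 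Without an argument of this kind, your proposal establishes the easy half of the theorem and leaves its heart unproved.
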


\noindent
We do not include a proof of Theorem \ref{thm jvb}, see \cite[Theorem 5.4]{hss}.

We will need some information about $2$-generated subalgebras of $A$.
This information is taken from \cite{hrs}.  Let $a,b\in\gD$ with $a\ne b$. We denote by   
{\bf $N_{a,b}$ the subalgebra generated by $a$ and $b.$
If $N_{a,b}$ contains an identity element, we denote it by $1_{a,b}$.}
Note that by \cite{hrs}, {\bf $2$-generated subalgebras
are at most $3$-dimensional}.

\begin{lemma}[Lemma 3.1.2 in \cite{hss}]\label{lem 2dim}
Let $a,b\in\gD$ with $a\ne b$.  Then $N_{a,b}$
is $2$-dimensional precisely in the following cases: 
 
\begin{enumerate}
\item
$ab=0;$ we then denote: $N_{a,b}=2B_{a,b}$.

\item
$\eta=-1, ab=-a-b;$ we then denote: $N_{a,b}=3C(-1)^{\times}_{a,b}$. 

\item
$\eta=\half, ab=\half a+\half b;$ we then denote: $N_{a,b}=J_{a,b}$.
\end{enumerate}
Furthermore,
\begin{itemize}
\item[(4)]
the algebras  $N_{a,b}$ in cases $(2)$ and $(3)$ above do not have an identity element.
\end{itemize}
\end{lemma}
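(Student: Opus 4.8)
The plan is to work inside the ${\rm ad}_a$-eigenspace decomposition of the subalgebra $N=N_{a,b}$. First I would note that, since $a$ and $b$ are nonzero idempotents, a relation $b=\mu a$ would force $\mu\in\{0,1\}$ and hence $b=a$, contrary to hypothesis; so $a,b$ are linearly independent, and as $N$ is at most $3$-dimensional by \cite{hrs}, proving $\dim N=2$ is the same as proving $N=\ff a\oplus\ff b$. Write $b=\gl a+b_0+b_\eta$ with $b_0\in A_0(a)$, $b_\eta\in A_\eta(a)$ and $\gl\in\ff$. Then $ab=\gl a+\eta b_\eta$, so $b_\eta=\eta^{-1}(ab-\gl a)\in N$ and hence also $b_0=b-\gl a-b_\eta\in N$. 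Since eigenvectors of ${\rm ad}_a$ for the distinct eigenvalues $1,0,\eta$ are linearly independent, the three elements $a,b_0,b_\eta$ of $N$ cannot all be nonzero, and they are not all zero (as $b\notin\ff a$). Thus $\dim N=2$ holds iff exactly one of $b_0,b_\eta$ vanishes, and the two resulting cases will give the three possibilities in the statement.

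Case $b_\eta=0$: here $b=\gl a+b_0$ with $b_0\neq0$ and $N=\ff a\oplus\ff b_0$. Using the fusion rules $A_1(a)A_0(a)=0$ and $A_0(a)^2\subseteq A_0(a)$ together with $N\cap A_0(a)=\ff b_0$, one gets $b^2=\gl^2a+b_0^2$ with $b_0^2=\kappa b_0$ for some $\kappa\in\ff$, and idempotency of $b$ yields $\gl^2=\gl$, $\kappa=1$. If $\gl=1$ then $ab=a$, so $a\in A_1(b)=\ff b$ by absolute primitivity of the axis $b$, contradicting independence of $a,b$; hence $\gl=0$, i.e. $ab=0$. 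This is case (1), with table $a^2=a$, $b^2=b$, $ab=0$; conversely $ab=0$ always closes up into exactly this $2$-dimensional algebra.

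Case $b_0=0$: here $b=\gl a+b_\eta$ with $b_\eta\neq0$, $N=\ff a\oplus\ff b_\eta$ and $N\cap A_+(a)=\ff a$. The fusion rule $A_\eta(a)^2\subseteq A_+(a)$ gives $b_\eta^2=\kappa a$, and expanding $b^2=b$ into ${\rm ad}_a$-eigencomponents yields $2\gl\eta=1$ and $\gl^2+\kappa=\gl$, so $\gl=\tfrac1{2\eta}$. Next I would look at ${\rm ad}_b$ acting on $N$: in the basis $(a,b)$ it is lower triangular with diagonal entries $\gl(1-\eta)$ and $1$, so its eigenvalues are $1$ and $\gl(1-\eta)$. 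Since $b$ is an $\eta$-axis, ${\rm ad}_b$ restricted to the invariant subspace $N$ is semisimple with spectrum in $\{1,0,\eta\}$; a coincidence $\gl(1-\eta)=1$ would make this restriction non-diagonalizable (as $\eta\neq0$), and $\gl(1-\eta)=0$ would force $\eta=1$, both excluded, so $\gl(1-\eta)=\eta$. With $\gl=\tfrac1{2\eta}$ this is $2\eta^2+\eta-1=0$, i.e. $(2\eta-1)(\eta+1)=0$, so $\eta=\half$ or $\eta=-1$. Substituting back into $ab=\eta b+\gl(1-\eta)a$ gives $ab=\half a+\half b$ when $\eta=\half$ (case (3)) and $ab=-a-b$ when $\eta=-1$ (case (2)). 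Conversely each such relation closes up, with $a^2=a$ and $b^2=b$, into a $2$-dimensional algebra (independence of $a,b$ again being forced by the relation), so (1)--(3) are precisely the $2$-dimensional cases.

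For part (4), suppose $N_{a,b}$ had an identity $1_{a,b}=xa+yb$ in case (2) or (3). Then $a=1_{a,b}\,a=xa+y\,ab$; substituting $ab=-a-b$, respectively $ab=\half a+\half b$, and comparing coefficients of the independent vectors $a,b$ forces $y=0$ and $x=1$, i.e. $1_{a,b}=a$; but then $b=1_{a,b}\,b=ab\neq b$, a contradiction. Hence $N_{a,b}$ has no identity in cases (2) and (3). (Alternatively, the whole lemma follows by reading off the $2$-dimensional, respectively unital, members of the classification of $2$-generated primitive axial algebras of Jordan type in \cite{hrs}.) The step I expect to need the most care is the one in the case $b_0=0$: idempotency of $b$ alone only determines $\gl$, and it is the further requirement that ${\rm ad}_b$ again be the adjoint of an $\eta$-axis — semisimple with spectrum in $\{1,0,\eta\}$ — that produces the constraint $2\eta^2+\eta-1=0$ pinning $\eta$ down to $\half$ or $-1$; without it one would wrongly expect a $2$-dimensional $N_{a,b}$ for every $\eta$.
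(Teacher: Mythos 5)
The paper does not actually prove this lemma: it is quoted verbatim from \cite{hss} (Lemma 3.1.2 there, building on the $2$-generated analysis of \cite{hrs}), so there is no in-paper argument to compare against. Your proof is, however, a correct self-contained derivation in exactly the spirit of that analysis: decompose $b=\gl a+b_0+b_\eta$ using absolute primitivity of $a$, observe $b_0,b_\eta\in N_{a,b}$, use linear independence of eigenvectors for distinct eigenvalues to reduce to the two cases $b_\eta=0$ and $b_0=0$, and then exploit idempotency of $b$, the fusion rules, and --- the key extra constraint --- semisimplicity of ${\rm ad}_b$ on the invariant subspace $N_{a,b}$ with spectrum in $\{1,0,\eta\}$ to force $\gl(1-\eta)=\eta$ and hence $(2\eta-1)(\eta+1)=0$; the computations for cases (1)--(3), the converse closure checks, and the elimination of an identity element in cases (2) and (3) are all correct. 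One small point of presentation: the sentence ``$\dim N=2$ holds iff exactly one of $b_0,b_\eta$ vanishes'' asserts the backward implication before it is justified; as written you only need (and only use) the forward implication there, and the genuine converse of the lemma is supplied later by your closure argument from the explicit relations, so the logic closes, but it would be cleaner to phrase that step as one-directional.
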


The following proposition deals with $2$-generated $3$-dimensional subalgebras.

\begin{prop}[Proposition 4.6 \cite{hrs}]\label{prop 3dim}
Let $a,b\in\gD$ with $a\ne b$.  Then $N_{a,b}$ is $3$-dimensional
precisely when $ab\ne 0$ and there exists $0\ne \gs\in N_{a,b}$
and 
a scalar $\gvp=\gvp_{a,b}\in\ff$ such that if we set $\pi=\pi_{a,b}=(1-\eta)\gvp-\eta,$
then
\begin{enumerate}
\item
$ab=\gs+\eta a+\eta b;$

\item
$\gs v=\pi v,$ for all $v\in\{a,b,\gs\}.$
\end{enumerate}
furthermore
\begin{itemize}
\item[(3)]
$N_{a,b}$ contains an identity element if and only if $\pi\ne 0,$
in which 
case $1_{a,b}=\frac{1}{\pi}\gs$.
\end{itemize}
When $N_{a,b}$ is $3$-dimensional we denote: $N_{a,b}=B(\eta,\gvp)_{a,b},$ where $\gvp\in\ff$
is the scalar mentioned above.
\end{prop}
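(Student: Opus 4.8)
The plan is to study $N=N_{a,b}$ through the eigenspace decomposition of $\mathrm{ad}_a$, using the two facts already available: $\dim N\le 3$ (from \cite{hrs}) and that the Miyamoto involution $\gt(a)$ is an automorphism of $A$. Write $b=\ga a+b_0+b_-$ with $\ga\in\ff$, $b_0\in A_0(a)\cap N$, $b_-\in A_\eta(a)\cap N$ (the $A_1(a)$-component is a scalar times $a$ by absolute primitivity). Since $a\in N$, $\gt(a)$ stabilizes $N$, and $b^{\gt(a)}=\ga a+b_0-b_-\in N$ forces $b_0,b_-\in N$; thus $N=\ff a\oplus(A_0(a)\cap N)\oplus(A_\eta(a)\cap N)$. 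From $ab=\ga a+\eta b_-$ one gets $b_-=\eta^{-1}(ab-\ga a)$ and $b_0=b-\ga a-b_-$, so $N=\mathrm{span}\{a,b,ab\}$; in particular $\dim N=3$ implies $ab\ne0$, and then $b_0\ne0$, $b_-\ne0$ and $\{a,b_0,b_-\}$ is a basis. Using $b^2=b$, the fusion rules, and the one-dimensionality of $A_0(a)\cap N$ and $A_\eta(a)\cap N$, I would record
\[
b_-^2=\ga(1-\ga)\,a+\gm\,b_0,\qquad b_0b_-=(\half-\ga\eta)\,b_-,\qquad b_0^2=(1-\gm)\,b_0
\]
for some $\gm\in\ff$, together with the identity $a(ab)=\eta(ab)+\ga(1-\eta)\,a$ (immediate from $ab=\ga a+\eta b_-$). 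Passing to the $\mathrm{ad}_b$-decomposition $a=\gb b+a_0+a_-$ gives the mirror relations, with $(\ga,b,b_0,b_-,\gm)$ replaced by $(\gb,a,a_0,a_-,\gm')$, and $b(ab)=\eta(ab)+\gb(1-\eta)\,b$.

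Now set $\gs\df ab-\eta a-\eta b$, $\gvp\df\ga$, and $\pi\df(1-\eta)\gvp-\eta$. Then (1) holds by definition, a one-line computation gives $\gs=\pi a-\eta b_0$ (so $\gs\ne0$ precisely because $b_0\ne0$), and $\gs a=\pi a$ follows from $ab_0=0$. The mirror computation gives $\gs b=\pi' b$ with $\pi'=(1-\eta)\gb-\eta$, so the decisive point is $\ga=\gb$. To obtain this I would compute $a(ab)$ in the basis $\{b,a_0,a_-\}$ in two ways — expanding the product directly from $a=\gb b+a_0+a_-$ and $ab=\gb b+\eta a_-$ (where the $b$-component of $a_-^2$ is $\gb(1-\gb)\,b$), and via the identity $a(ab)=\eta(ab)+\ga(1-\eta)a$ — and match the coefficient of $b$; this yields $\gb(1-\eta)(\gb-\ga)=0$. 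The symmetric computation of $b(ab)$ in $\{a,b_0,b_-\}$ yields $\ga(1-\eta)(\ga-\gb)=0$, and adding the two, since $\eta\ne1$, gives $(\ga-\gb)^2=0$. With $\ga=\gb$ in hand, matching the $b_0$-coefficient in the two expansions of $b(ab)$ forces $\gm=(1-\eta)\ga/\eta$, hence $b_0^2=(1-\gm)b_0=-(\pi/\eta)b_0$, and therefore $\gs^2=\pi^2a+\eta^2(1-\gm)b_0=\pi^2a-\pi\eta b_0=\pi\gs$. This establishes (2).

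For the converse direction, suppose $ab\ne0$ and some $0\ne\gs$ and $\gvp$ satisfy (1) and (2). The products $a^2=a$, $b^2=b$, $ab=\gs+\eta a+\eta b$, $a\gs=\pi a$, $b\gs=\pi b$, $\gs^2=\pi\gs$ show that $\mathrm{span}\{a,b,\gs\}$ is a subalgebra containing $a$ and $b$, hence equals $N$; a short case check (using $ab\ne0$ and $\eta\notin\{0,1\}$ to exclude $\gs\in\mathrm{span}\{a,b\}$) shows $\{a,b,\gs\}$ is linearly independent, so $\dim N=3$. For part (3): if $\pi\ne0$ then $\pi^{-1}\gs$ acts as the identity on $a$, $b$ and $\gs$, hence on all of $N$, so $1_{a,b}=\pi^{-1}\gs$; if $\pi=0$ then $\gs\ne0$ annihilates $a$, $b$ and $\gs$, hence all of $N$, and an identity $e$ of $N$ would force $\gs=\gs e=0$, a contradiction.

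I expect the real obstacle to be the equality $\ga=\gb$, that is, that $\gs$ acts by a single scalar $\pi$ on both $a$ and $b$. It is the one place where the idempotency and fusion rules of $b$, not merely of $a$, are genuinely used, and it has to be extracted by playing the two eigenbases off against each other; the remaining steps are routine bookkeeping with the eigenspace decomposition.
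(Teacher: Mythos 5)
Your proof is correct, but note that the paper itself gives no argument for this statement: it is quoted from \cite[Proposition 4.6]{hrs}, and the one genuinely non-routine ingredient you isolate --- that $\gs$ acts on $a$ and on $b$ by the \emph{same} scalar, i.e.\ $\ga=\gb$ --- is exactly the symmetry $\gvp_a(b)=\gvp_b(a)$ that \cite{hrs} proves separately (their Lemma 4.4, quoted later in this paper as Lemma~\ref{symmetric}). So where the source can simply invoke that lemma, you re-derive it inside the $3$-dimensional case by expanding $a(ab)$ and $b(ab)$ in the two eigenbases and adding $(1-\eta)\gb(\gb-\ga)=0$ to $(1-\eta)\ga(\ga-\gb)=0$. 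I checked this and the surrounding computations ($\gs=\pi a-\eta b_0$, $\eta\gm=\ga(1-\eta)$, $b_0^2=-(\pi/\eta)b_0$, $\gs^2=\pi\gs$), and they are right; the steps you label routine also hold: the components $b_0,b_-$ lie in $N$ because $N$ is ${\rm ad}_a$-invariant and ${\rm ad}_a$ is semisimple, and $\dim N=3$ forces $ab\ne 0$ and $b_0\ne 0\ne b_-$, since otherwise $\ff a+\ff b_0$ or $\ff a+\ff b_-$ is already a subalgebra containing $b$. The converse (closure of ${\rm span}\{a,b,\gs\}$ plus the short independence check) and part (3) are fine. One remark: matching the $a_-$-coefficient in your two expansions of $a(ab)$ gives the additional relation $\tfrac{\eta}{2}+\gb\eta=\eta^2+\ga(1-\eta)$, hence $(2\eta-1)\bigl(\gvp-\tfrac{\eta}{2}\bigr)=0$; so your method proves more than the proposition asserts, namely that for $\eta\ne\half$ the only $3$-dimensional $2$-generated case is $\gvp=\tfrac{\eta}{2}$ (the algebra $3C(\eta)$). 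That extra conclusion is true and consistent with \cite{hrs} and with Lemma~\ref{lem matsuo}(2), so it is a by-product rather than a flaw; the trade-off of your self-contained route is only that the fusion rules for $b$ enter through the mirror relations, so a full write-up must state explicitly that the ${\rm ad}_b$-decomposition of $N$ satisfies the same one-dimensionality facts as the ${\rm ad}_a$-one, which holds by symmetry of the hypotheses.
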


\text{From} now on we assume that {\bf $\gD_{\cala}$ is connected}.
 Note that by \cite[Lemma 6.4]{hss},
$\gD_{\cala}$ is connected iff $\gD$ is connected.
Further, we assume
that {\bf $a,b\in\gD$ are distinct with $\gt(a)=\gt(b)$}.

\begin{prop}[Proposition 6.5 in \cite{hss}]\label{prop ta=tb}
$ab=0$ and
\begin{enumerate}
\item
for any   
$c\in \gD\sminus\{a,b\}$ exactly one the following holds:
\begin{itemize}
\item[(i)]
$ac=bc=0.$

\item[(ii)]
$\eta=\half,$ and for some $x\in\{a,b\}=\{x,y\},$ we have $N_{x,c}=B(\half,0)_{x,c}$ is $3$-dimensional,
$N_{y,c}=J_{y, c}$  and $N_{y,c}\subset N_{x,c}$.
Further   $a+b=1_{x,c}$.

\item[(iii)]
$\eta=\half,$ 
$N_{a,c}=N_{b,c}$ is $3$-dimensional
and   $a+b=1_{a,c}$.  
\end{itemize}
\item
If $d$ is an $\eta$-axis in $A$ such that $\tau(d)=\tau(a),$
then $d\in\{a,b\}$.
\end{enumerate}
\end{prop}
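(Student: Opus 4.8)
I would first show $ab=0$. Since $b\in A_1(b)\subseteq A_+(b)$ is fixed by $\gt(b)=\gt(a)$, writing $b=b_++b_-$ with $b_\pm\in A_\pm(a)$ gives $b_+-b_-=b^{\gt(a)}=b$, hence $b_-=0$ and $b=\gl a+b_0$ with $b_0\in A_0(a)$; comparing $A_1(a)$-components in $b=b^2$ forces $\gl\in\{0,1\}$, and $\gl=1$ would give $ab=a$ with $N_{a,b}$ two-dimensional, contradicting Lemma \ref{lem 2dim}. So $\gl=0$ and $ab=0$. Then $a+b$ is idempotent, $A_+(a)=A_+(b)$ and $A_\eta(a)=A_\eta(b)=:A_-$ are the $\pm1$-eigenspaces of $\gt(a)$, and $A_+:=A_+(a)=\ff a\oplus\ff b\oplus A_{00}$ with $A_{00}:=A_0(a)\cap A_0(b)$. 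A short idempotent computation shows every $\eta$-axis in $A_+$ lies in $A_{00}$, so for an $\eta$-axis $c\notin\{a,b\}$ one has $ac=0\iff c\in A_+\iff bc=0$; hence either $ac=bc=0$, or $ac\ne0\ne bc$. In the first case $c\in A_{00}$, which is case (i), and (ii), (iii) fail since $xc=0$ gives $N_{x,c}=2B_{x,c}$ while both require some $N_{x,c}$ ($x\in\{a,b\}$) three-dimensional.

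Now assume $ac\ne0\ne bc$, so $N_{a,c}$ is not a $2B$. It is not a $3C(-1)^\times$ either: there $\eta=-1$, and using $A_\eta(b)=A_\eta(a)$ with $ab=0$ one computes $cb=-c-\half a$, so $N_{b,c}=B(-1,\psi)_{b,c}$ is three-dimensional with $\gs_{b,c}=b-\half a$, and then $\gs_{b,c}b=b$ and $\gs_{b,c}c=-\half c$ give incompatible values of $\pi_{b,c}$, contradicting Proposition \ref{prop 3dim}. Thus $N_{a,c}$ is $J_{a,c}$ or a three-dimensional $B(\eta,\gvp)_{a,c}$. If $N_{a,c}=J_{a,c}$, then $\eta=\half$; writing $c=a+(c-a)$ with $c-a\in A_\half(a)=A_-$ and using $a\in A_0(b)$ gives $bc=\half(c-a)$, hence $a\in N_{b,c}$, $N_{b,c}$ is three-dimensional, and by Proposition \ref{prop 3dim} it equals $B(\half,0)_{b,c}$ with $1_{b,c}=a+b$ --- case (ii) with $x=b$.

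If instead $N_{a,c}=B(\eta,\gvp)_{a,c}$ is three-dimensional, set $\gs=\gs_{a,c}$. When $b\in N_{a,c}$, the element $b$ is a nonzero idempotent of the one-dimensional subalgebra $A_0(a)\cap N_{a,c}$, and Proposition \ref{prop 3dim} then forces $\pi\ne0$ and $\gs=\pi(a+b)$, so $1_{a,c}=a+b$; hence $N_{b,c}\subseteq N_{a,c}$ is either $N_{a,c}$ itself (case (iii)) or is two-dimensional $J_{b,c}$, in which case $\gs=-\half(a+b)$ and $N_{a,c}=B(\half,0)_{a,c}$ (case (ii) with $x=a$); in both, $(a+b)c=c$ gives $(2\eta-1)c_-=0$ with $c_-\ne0$, so $\eta=\half$. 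The one remaining configuration --- $N_{a,c}$ three-dimensional with $b\notin N_{a,c}$ --- must be excluded: I would substitute $c=c_++c_-$ ($c_\pm\in A_\pm$) into the relations $\gs a=\pi a$, $\gs c=\pi c$, $\gs^2=\pi\gs$ of Proposition \ref{prop 3dim} and use $ab=0$, $A_+\cap A_-=0$ and the fusion rules to derive either $c=0$ or $b\in N_{a,c}$. \textbf{I expect this exclusion to be the main obstacle:} it is the only step needing all of the structure constants of Proposition \ref{prop 3dim}, the orthogonality $ab=0$, and the rigidity of the $A_+\oplus A_-$ grading at once, and it is also where $\eta=\half$ gets forced in the borderline cases. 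Together these considerations complete part (1) (and show, incidentally, that $\eta=\half$ as soon as $a$ has a neighbour in $\gD$).

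For part (2), let $d\notin\{a,b\}$ be an $\eta$-axis with $\gt(d)=\gt(a)$. The $ab=0$ argument applied to $\{a,d\}$ and to $\{b,d\}$ gives $ad=bd=0$, so $a,b,d$ are pairwise orthogonal idempotents sharing a Miyamoto involution. As $\gD$ is connected with at least two vertices, $a$ has a neighbour $c_0$, necessarily adjacent to $b$ and to $d$ as well. Applying part (1) to the three pairs inside $\{a,b,d\}$, each with common axis $c_0$: at most one of $N_{a,c_0},N_{b,c_0},N_{d,c_0}$ is two-dimensional --- two of them being $J_{\cdot,c_0}$ would contradict cases (ii)/(iii) for the corresponding pair --- so at least two, say $N_{p,c_0}$ and $N_{q,c_0}$ with $\{p,q,r\}=\{a,b,d\}$, are three-dimensional. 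Then part (1) for the pair $\{p,q\}$ gives $1_{p,c_0}=p+q$, while for the pair $\{p,r\}$ it gives $1_{p,c_0}=p+r$; hence $q=r$, a contradiction. Therefore $d\in\{a,b\}$.
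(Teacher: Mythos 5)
Most of what you wrote is correct, and in places more self-contained than the paper's own sketch: your direct derivation of $ab=0$ from $b^{\gt(a)}=b$ (the paper simply quotes \cite[Lemma 3.2.1]{hss}), your internal derivation of $\eta=\half$ from $1_{a,c}=a+b$ acting as identity on $N_{a,c}$ (the paper instead kills $\eta\ne\half$ at once by citing \cite[Proposition 6.5]{hrs} and connectedness), and your deduction of part (2) from the trichotomy of part (1) (which the paper's sketch omits entirely, deferring to \cite{hss}) all check out. The genuine gap is exactly where you flagged it, and it is not a routine verification: excluding the configuration in which $N_{a,c}$ is three-dimensional but $b\notin N_{a,c}$. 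This is precisely the case the paper singles out as ``the hardest case'' (in its language: both $N_{a,c}$ and $N_{b,c}$ three-dimensional while $V=N_{c,c^{\gt(a)}}$ is two-dimensional), and your proposed strategy cannot close it. The relations $\gs a=\pi a$, $\gs c=\pi c$, $\gs^2=\pi\gs$ see only $N_{a,c}$; writing $c=\gl a+\mu b+u+w$ with $u\in A_{00}=A_0(a)\cap A_0(b)$ and $w\in A_\eta(a)=A_\eta(b)$ and expanding against the grading $A=\ff a\oplus\ff b\oplus A_{00}\oplus A_\eta(a)$, they give only $\gs=\pi a-\eta(\mu b+u)$ together with $\mu(\eta\mu+\pi)=0$, $\eta u^2=-\pi u$ and one linear condition on $uw$; even adding $c^2=c$ these constraints are formally consistent with $\mu=0$ and $u\ne0$, i.e.\ with $b\notin N_{a,c}$. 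So no contradiction can be extracted at that level of generality: one needs input that couples $b$ and $c$, which in \cite{hss} is supplied by the subalgebra $V=N_{c,c^{\gt(a)}}\subseteq N_{a,c}\cap N_{b,c}$ (note $c^{\gt(a)}=c^{\gt(b)}$ is again an $\eta$-axis) analysed together with the fusion rules at $c$; this is where the ``precise work'' of \cite[Proposition 6.5]{hss} actually lies. Until that case is closed, part (1) is incomplete, and with it your part (2), which uses the full trichotomy.

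A smaller point: your exclusion of $N_{a,c}=3C(-1)^{\times}_{a,c}$ rests on the incompatibility of $\pi_{b,c}=1$ and $\pi_{b,c}=-\half$, which fails in characteristic $3$, where $1=-\half$ and indeed $-1=\half$, so that $3C(-1)^{\times}$ and $J$ coincide. No harm results, since that situation is then literally your $J_{a,c}$ case, but the case division should be worded to reflect it.
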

\begin{proof}[Proof sketch]
By \cite[Lemma 3.2.1]{hss}, for any $c\in\gD,$ 
we have $ac=0\iff c^{\gt(a)}=c,$ and since, by definition, $a^{\gt(b)}=a^{\gt(a)}=a,$
we see that $ab=0$.
 
If $ac=0,$ then, as above  $bc=0$ (and vice versa),
so (i) holds.  Hence we may assume that $ac\ne 0\ne bc$.

If $\eta\ne\half,$ then by \cite[Proposition 6.5]{hrs}, and since $\gD$ is connected,
$a=b,$ a contradiction.  Thus $\eta=\half$.

Now consider  
\[
V:=N_{c,c^{\tau(a)}}\subseteq N_{a,c}\cap N_{b,c}.
\]
$V$ is either $2$ or $3$-dimensional.
If $V$ is $3$-dimensional, then $N_{a,c}=V=N_{b,c},$
and since $ab=0,$ one shows that $a+b=1_{a,c}$ (\cite[Lemma 3.2.5]{hss}),  
so (iii) holds.

So suppose $V$ is $2$-dimensional.  If both $N_{a,c}$ and $N_{b,c}$
are $2$-dimensional, then they both equal to $N_{a,b}=\ff a\oplus\ff b$.
But then $c=a$ or $b,$ a contradiction.

Therefore without loss $N_{a,c}$ is $3$-dimensional and $V$ 
is  $2$-dim\-en\-sion\-al.
If $V=N_{b,c}$ then (ii) holds:  Clearly $N_{b,c}\subset N_{a,c}$ and
$a+b=1_{a,c},$ and then a careful analysis of the situation gives
(ii).

The case where both $N_{a,c}$ and $N_{b,c}$ are $3$-dimensional
and $V$ is $2$-dimensional is the hardest case and some
precise work is required to get a contradiction.
\end{proof}

\begin{prop}\label{prop e}
$\eta=\half$ and 
\begin{enumerate}
\item
$xa\ne 0\ne xb,$ for all  $x\in\gD\sminus\{a,b\};$

\item
$A$ contains an identity element $\e=a+b;$

\item
for any $x\in\gD$ such that   
$N_{a,x}$ is $3$-dimensional we have $\e=1_{a,x}$.
\end{enumerate}
\end{prop}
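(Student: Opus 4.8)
The plan is to route everything through the single claim that $e:=a+b$ is the identity of $A$; once that is in hand, (1) and (3) are short. First, $\eta=\half$: since $\gD_{\cala}$, hence $\gD$, is connected and $a\neq b$, the vertex $a$ has a neighbour $c\in\gD$, and since $ab=0$ by Proposition \ref{prop ta=tb} we have $c\neq b$, so $c\in\gD\sminus\{a,b\}$ with $ac\neq 0$; this rules out alternative (i) of Proposition \ref{prop ta=tb}(1), and alternatives (ii), (iii) each force $\eta=\half$. Now write $\tau=\tau(a)=\tau(b)$. For $x\in\{a,b\}$ the spaces $A_+(x)$, $A_-(x)$ are the $(\pm1)$-eigenspaces of $\tau$, so $A_+(a)=A_+(b)$ and $A_{\half}(a)=A_-(a)=A_-(b)=A_{\half}(b)$; as $ab=0$ we get $b\in A_0(a)$ and $a\in A_0(b)$, and, applying the (semisimple) operator ${\rm ad}_b$ to the invariant subalgebra $A_0(a)$, one finds $A_0(a)=\ff b\oplus W$ and $A_0(b)=\ff a\oplus W$, where $W:=A_0(a)\cap A_0(b)$. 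A direct check gives $ea=a$, $eb=b$, $ev=v$ for $v\in A_{\half}(a)$, and $ew=aw+bw=0$ for $w\in W$; hence $e$ is the identity of $A$ if and only if $W=0$. If $W=0$ then (2) holds, (1) follows (if $ax=0$ then $x^{\tau}=x$, so $bx=0$ by \cite[Lemma 3.2.1]{hss}, whence $x\in W=0$, a contradiction), and (3) follows from Proposition \ref{prop ta=tb}: if $N_{a,x}$ is $3$-dimensional then $ax\neq 0$ by Proposition \ref{prop 3dim}, so $x$ is of type (ii) or (iii), and in either case the $3$-dimensional algebra $N_{a,x}$ has identity $a+b=\e$. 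Thus the whole statement reduces to proving $W=0$, equivalently $A_0(a)=\ff b$.

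To prove $W=0$ I would first rule out an ``orthogonal extra axis'': suppose $x\in\gD\sminus\{a,b\}$ satisfies $ax=bx=0$ (so $x\in W$). Connectedness of $\gD$ gives $x$ a neighbour $c$, which must lie in $\gD\sminus\{a,b\}$ and be of type (ii) or (iii); confronting $N_{x,c}$ with the $3$-dimensional member of $\{N_{a,c},N_{b,c}\}$, using Lemma \ref{lem 2dim} and Proposition \ref{prop 3dim}, produces a contradiction. Consequently every $c\in\cala\sminus\{a,b\}$ sits inside a $3$-dimensional $N_{a,c}$ or $N_{b,c}$ with identity $a+b$ and spanned by $\{a,b,c\}$; since $a$ is absolutely primitive and $c\notin A_+(a)$ (because $ac\neq 0$), the ${\rm ad}_a$-eigenspace decomposition of that $3$-dimensional subalgebra forces its $0$-eigenspace to equal $\ff b$ and its $\half$-eigenspace to be $1$-dimensional. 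Hence $c=\alpha_c a+\beta_c b+v_c$ with $0\neq v_c\in A_{\half}(a)$, and expanding $c^2=c$ gives $\alpha_c+\beta_c=1$ and $v_c^2=\alpha_c\beta_c(a+b)$.

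The remaining step is to show $v_cv_{c'}\in\ff(a+b)$ for all $c,c'\in\cala$. One expands $cc'$ using these decompositions and matches it against the multiplication rule for $N_{c,c'}$ supplied by Lemma \ref{lem 2dim}/Proposition \ref{prop 3dim}: the cases $cc'=0$ (which forces $v_{c'}=-v_c$) and $N_{c,c'}=J_{c,c'}$ are immediate computations, while the case $N_{c,c'}=B(\half,\gvp)_{c,c'}$ comes down to showing that the associated element $\sigma=cc'-\half c-\half c'$ lies in $\ff a+\ff b$. With this known, the $\zz/2$-grading $A=A_+(a)\oplus A_{\half}(a)$ and a bottom-up induction on products of elements of $\cala$ show that $A_{\half}(a)$ is the linear span of the $v_c$, hence $A_{\half}(a)^2\subseteq\ff(a+b)$; therefore $\ff a\oplus\ff b\oplus A_{\half}(a)$ is a subalgebra containing $\cala$, so it equals $A$, giving $A_0(a)=\ff b$ and $W=0$.

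The genuinely hard parts are the elimination of an orthogonal extra axis and the $3$-dimensional subcase $N_{c,c'}=B(\half,\gvp)_{c,c'}$: these are precisely where the connectedness of $\gD$ and the detailed classification of $2$-generated primitive axial algebras of Jordan type from \cite{hrs} and \cite{hss} must be used in earnest. Everything else is bookkeeping with the simultaneous ${\rm ad}_a$- and ${\rm ad}_b$-eigenspace decompositions of $A$.
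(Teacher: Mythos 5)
Your reduction is fine as far as it goes: using $\gt(a)=\gt(b)$ to get $A_+(a)=A_+(b)$, $A_{\half}(a)=A_{\half}(b)$, the decomposition $A_0(a)=\ff b\oplus W$ with $W=A_0(a)\cap A_0(b)$, and the observation that $\e=a+b$ is the identity if and only if $W=0$, is correct (and the derivation of $\eta=\half$ from Proposition \ref{prop ta=tb} is fine). The problem is that everything then hinges on $W=0$, and the two steps you yourself flag as ``the genuinely hard parts'' are never proved; they are precisely where the content of the proposition lies. In particular, the elimination of an axis $x\in\gD\sminus\{a,b\}$ with $ax=bx=0$ \emph{is} statement (1), and ``confronting $N_{x,c}$ with the $3$-dimensional member of $\{N_{a,c},N_{b,c}\}$ \dots produces a contradiction'' is an assertion, not an argument: it is not at all clear that any contradiction falls out of Lemma \ref{lem 2dim} and Proposition \ref{prop 3dim} alone. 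The paper needs a genuine extra idea here: taking $y$ at distance $2$ from $a$ with a common neighbour $x$, one checks that $\e=a+b$ is fixed by both $\gt(x)$ and $\gt(y)$, hence acts as the identity on ${\rm Span}(x,x^{\gt(y)})$ and as zero on ${\rm Span}(y,y^{\gt(x)})$; these are two $2$-dimensional subspaces of $N_{x,y}$, which has dimension at most $3$, so they meet nontrivially, and $\e$ acts as both $0$ and $1$ on the intersection. Nothing of this sort appears in your sketch. There is also a logical slip even in the set-up of your elimination step: a neighbour $c$ of $x$ need not be of type (ii) or (iii) --- it could itself satisfy $ac=bc=0$ --- so you would in any case have to work with a vertex at minimal distance from $a$, or along a geodesic, as the paper does.

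The second deferred step is also a real gap, and moreover a sign that the route is heavier than necessary: proving $v_cv_{c'}\in\ff(a+b)$ in the case $N_{c,c'}=B(\half,\gvp)_{c,c'}$ amounts to redoing a piece of the Clifford identification (Theorem \ref{thm jvb}), which the paper deliberately postpones to the proof of Theorem \ref{thm main}. In the paper, once (1) is established one simply notes that $\e c=c$ for every axis $c\in\gD$ (for neighbours of $a$ this comes from $\e=1_{a,c}$, via Proposition \ref{prop ta=tb}), and since $A$ is spanned by axes, $\e$ is the identity; no analysis of products $v_cv_{c'}$ is needed at this stage. So the proposal, as written, proves neither (1) nor (2): the framework is sound, but the decisive argument is missing.
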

\begin{proof} 
Let $d(\ ,\ )$
be the distance function on $\gD$.  Let
\[
\gD_1(a):=\{x\in\gD\mid d(a,x)=1\}.
\]
Since $\gD$ is connected $\gD_1(a)\ne\emptyset$.  Also,
by Proposition \ref{prop ta=tb}(1i), $\gD_1(a)=\gD_1(b)$.   
Let $c\in \gD_1(a)$. By Proposition \ref{prop ta=tb}, $\eta=\half$
and after perhaps interchanging $a$ and $b,$ $N_{a,c}$ is
$3$-dimensional and $a+b=1_{a,c}$. 
Set
\[
\e=1_{a,c}=a+b,
\]
then
\[
\e c=c,\text{ for all }c\in\gD_1(a).
\]

Let $y\in\gD\sminus\gD_1(a)$ be at distance $2$ from $a$ in $\gD,$
and let
\[
x\in\gD_1(a)\cap\gD_1(y).
\]
Without loss $N_{a,x}$ is $3$-dimensional and $\e=1_{a,x}$.
Now
\begin{itemize}
\item  
$ay=0=by\implies \e^{\gt(y)}=(a+b)^{\gt(y)}=a^{\gt(y)}+b^{\gt(y)}=a+b=\e.$

\item
$\e^{\gt(x)}=\e$ because $\e=1_{a,x}.$

\item
$\e y=0$ so $\e y^{\gt(x)}=0.$

\item
$\e x=x$ so $\e x^{\gt(y)}=x^{\gt(y)}.$

\item
$W:={\rm Span}\big(\{y,\ y^{\tau(x)}\}\big)\cap {\rm Span}\big(\{x,\ x^{\tau(y)}\}\big)\ne \{0\}$.
Indeed, $W$ 
is the intersection of two $2$-dimensional subspaces of $N_{x,y}$ which is of dimension
at most $3$.

\item
$\e$ both annihilates and acts as identity on $W,$ a contradiction.
\end{itemize}

Hence $\gD_1(a)=\gD\sminus\{a, b\}$ and clearly $d(a,b)=2$
in $\gD$.  But now, as we saw above, $\e c=c$ for
all $c\in\gD$.   
It follows that $\e$
is the identity of $A$ and (3) holds as well.
\end{proof}
 
We are now in a position to prove Theorem \ref{thm main}.
\medskip

\begin{proof}[{\bf Proof of Theorem \ref{thm main}}]
We show that the hypotheses of Theorem \ref{thm jvb} are satisfied.
By Proposition \ref{prop e}, $\eta=\half$ and $a+b=\e_A$. 
Let $c\in\gD$.  Then
%
\begin{gather*}\label{eq vv}
\textstyle{
v_av_c=(a-\half\e)(c-\half\e)=}\\\notag
\textstyle{ac-\half a-\half c+\frac{1}{4}\e=
\gs_{a,c}+\frac{1}{4}\e.}
\end{gather*}

 Clearly $v_av_c\in\ff\e$ if $c\in\{a,b\}$.
Otherwise, by Proposition \ref{prop e}(1), $ac\ne 0$.  
If $N_{a,c}$ is $2$-dimensional, then since $ac\ne 0,$
$\gs_{a,c}=0,$ and so $v_av_c\in \ff\e$.
If $N_{a,c}$ is $3$-dimensional, then by Proposition \ref{prop e}(3), $\e=1_{a,c}$.
Furthermore by [HRS],  $\gs_{a,c}=\pi_{a,c}1_{a,c}=\pi_{a,c}\e,$
for some $\pi_{a,c}\in\ff,$ and again $v_av_c\in\ff\e$.
\end{proof}
%
%
%
\section{$3$-transpositions and Matsuo algebras}\label{}
Recall that a set of axes $\cala$
	is closed iff $a^{\gt_b}\in\cala,$ for all $a, b\in\cala$.
In this section $A$ is a primitive axial algebra of Jordan type $\eta$
generated by a closed set of $\eta$-axes $\cala,$ such that $|\cala|>1$.

Let $G$ be a group generated by a normal set of involutions $D$.
Recall that $D$ is called {\it a set of $3$-transpositions in $G$}
if $|st|\in\{1,2,3\},$ for all $s,t\in D$.  The group
$G$ is then called a {\it $3$-transposition group}.

Let $D$ be a normal set of $3$-transpositions in the group $G$ that generate $G$. 
The {\bf Matsuo algebra} associated with the pair $(G,D),$ denoted here $M_{\gd}(G,D),$ is
defined as follows.  As a vector space over $\ff$ it  has the basis $D$.  Multiplication
is defined for $x,y\in D$ as follows
\begin{equation*}
x\cdot y=
\begin{cases}
x, &\ {\rm if}\   y=x\\
0, &\ {\rm if}\ |xy|=2\\
\gd(x+y-x^y), &\ {\rm if}\ |xy|=3.
\end{cases}
\end{equation*}
This is extended by linearity to the entire algebra. (Note that we denote multiplication in $G$ by
juxtaposition and in $M_{\gd}(G,D)$ by dot.) By \cite[Theorem 6.2]{hrs}, $M_{\gd}(G,D)$
is a primitive axial algebra of Jordan type $2\gd$.

The purpose of this section
is to prove the following Theorem:

\begin{thm}\label{thm matsuo}
Suppose that the graph $\gD_{\cala}$ is connected.
Let $D:=\{\gt_a\mid a\in \cala\}$ and $G=\lan D\ran$.
Assume that the map $a\mapsto\gt_a$ on $\cala$ is injective and that $D$ is a set of $3$-transpositions in $G$.
Then $A$ is a quotient of the Matsuo algebra $M_{\frac{\eta}{2}}(G,D)$.
\end{thm}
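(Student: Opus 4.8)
The plan is to build an algebra homomorphism $\phi\colon M_{\frac{\eta}{2}}(G,D)\to A$ and show it is surjective. The natural candidate sends the basis element $\gt_a\in D$ to the axis $a\in\cala$; this is well-defined on the basis precisely because $a\mapsto\gt_a$ is assumed injective, so $D$ is in bijection with $\cala$, and it extends uniquely to a linear map $\phi$. Since $A$ is generated by $\cala=\phi(D)$, the map $\phi$ is automatically surjective once we know it is an algebra homomorphism. So the whole content is to verify that $\phi$ respects multiplication, i.e.\ that for all $a,c\in\cala$ the product $a\cdot c$ in $A$ matches the image under $\phi$ of the product $\gt_a\cdot\gt_c$ in the Matsuo algebra.

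First I would reduce to checking the three cases of the Matsuo multiplication rule on pairs of basis elements $\gt_a,\gt_c$, using that $D$ is a set of $3$-transpositions so $|\gt_a\gt_c|\in\{1,2,3\}$. The case $a=c$ is trivial: $\gt_a\cdot\gt_a=\gt_a$ maps to $a=a^2$ since $a$ is an idempotent. If $|\gt_a\gt_c|=2$, then $\gt_a$ and $\gt_c$ commute; I would argue that this forces $ac=0$, so $\phi(\gt_a\cdot\gt_c)=\phi(0)=0=ac$. The key input here is the relation between commuting Miyamoto involutions and orthogonality of axes, which is exactly the kind of statement recorded as \cite[Lemma 3.2.1]{hss} and used in the proof of Proposition~\ref{prop ta=tb}: $ac=0\iff c^{\gt(a)}=c$, together with the observation that if $\gt_a,\gt_c$ commute then $\gt_a$ fixes $c$ (since $c$ is determined by $\gt_c$, by injectivity, and $\gt_c^{\gt_a}=\gt_a\gt_c\gt_a=\gt_c$ means $\gt_a$ conjugates $c$ to an axis with the same Miyamoto involution, hence to $c$ itself). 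Then $c^{\gt(a)}=c$ gives $ac=0$.

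The main case, and the main obstacle, is $|\gt_a\gt_c|=3$: I must show $ac=\frac{\eta}{2}\bigl(a+c-c^{\gt_a}\bigr)$ in $A$, where $c^{\gt_a}$ denotes the axis with Miyamoto involution $\gt_a\gt_c\gt_a$ (again using injectivity to identify it). Here I would pass to the $2$-generated subalgebra $N_{a,c}$. Since $ac\ne 0$ (the product cannot be $0$, else $\gt_a,\gt_c$ would commute as in the previous paragraph, contradicting $|\gt_a\gt_c|=3$), Lemma~\ref{lem 2dim} and Proposition~\ref{prop 3dim} describe $N_{a,c}$ completely: either $\eta=\half$ with $N_{a,c}=J_{a,c}$ two-dimensional, or $N_{a,c}=B(\eta,\gvp)_{a,c}$ three-dimensional. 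In the two-dimensional Jordan case $ac=\half a+\half c$ and one checks directly that $c^{\gt_a}=c$ (the subalgebra $J_{a,c}$ has $\gt_a$ acting trivially on $c$), so the formula reads $ac=\frac{\eta}{2}(a+c-c)+\text{(correction)}$; I would need the precise bookkeeping, using $\eta=\half$, to confirm it collapses to $\half a+\half c$. In the three-dimensional case, $B(\eta,\gvp)$ is well understood from \cite{hrs}: it contains the three axes $a$, $c$, $c^{\gt_a}=a^{\gt_c}$, the element $\gs=\gs_{a,c}$ is symmetric in them, and the relation $ab=\gs+\eta a+\eta b$ from Proposition~\ref{prop 3dim}(1) combined with the analogous expression for $\gs$ in terms of $a,c,c^{\gt_a}$ yields $ac=\frac{\eta}{2}(a+c-c^{\gt_a})$ after eliminating $\gs$. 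The delicate point throughout is keeping track of which axis is $c^{\gt_a}$ and confirming it coincides with the Matsuo-algebra element $(\gt_c)^{\gt_a}=\gt_a\gt_c\gt_a$ under $\phi$; this is where injectivity of $a\mapsto\gt_a$ is used essentially, since it is what lets us transport the action of $\gt_a$ on axes to the action on $D$ consistently. Once the multiplication is verified on all pairs from the basis $D$, bilinearity extends it to all of $M_{\frac{\eta}{2}}(G,D)$, and $\phi$ is the desired surjective homomorphism, exhibiting $A$ as a quotient.
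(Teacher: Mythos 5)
Your overall framework is the same as the paper's: define the linear map on the basis $D$ (well defined by injectivity), get surjectivity from the fact that $\cala$ spans $A$ (Corollary \ref{cor gen}), and check multiplicativity according to $|\gt_a\gt_c|\in\{1,2,3\}$. The identity case and the order-$2$ case are fine (your argument via $\gt_{c^{\gt_a}}=\gt_c^{\gt_a}=\gt_c$, injectivity and the fixed-point criterion $ac=0\iff c^{\gt_a}=c$ is essentially the content of Lemma \ref{lem matsuo}(1)). The problem is the order-$3$ case, which is where the real work lies, and there your sketch has concrete errors and a missing key step. First, in the two-dimensional case $N_{a,c}=J_{a,c}$ your claim that $\gt_a$ fixes $c$ is false: writing $c=a+(c-a)$ with $c-a\in A_{\half}(a)$ gives $c^{\gt_a}=2a-c\neq c$ (indeed $c^{\gt_a}=c$ would force $ac=0$ by the very criterion you use in the order-$2$ case). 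With the correct value one finds that $ac=\half a+\half c$ equals $\frac{\eta}{2}(a+c-c^{\gt_a})=\frac14(2c-a)$ only in characteristic $3$; the actual point, which your sketch misses, is that this configuration is incompatible with $(\gt_a\gt_c)^3=1$ except in that degenerate situation, and this is exactly what Lemma \ref{lem matsuo}(2) (via \cite{hss}) rules in or out. Second, your case list omits $N_{a,c}=3C(-1)^{\times}_{a,c}$ ($\eta=-1$, two-dimensional with $ac=-a-c\neq0$), which genuinely occurs with $|\gt_a\gt_c|=3$; it happens to satisfy the Matsuo formula, but it must be checked.

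Third, and most importantly, in the three-dimensional case ``eliminating $\gs$'' does not by itself yield $ac=\frac{\eta}{2}(a+c-c^{\gt_a})$. What elimination gives (this is Lemma \ref{lem ab}, using the explicit formula for $a^{\gt_c}$ in $B(\eta,\gvp)$ from \cite{hss}) is $ac=\frac{\eta}{2}a+\gvp_{a,c}\,c-\frac{\eta}{2}a^{\gt_c}$, in which the coefficient of $c$ is $\gvp_{a,c}$, not $\frac{\eta}{2}$. The heart of the theorem is precisely the implication $(\gt_a\gt_c)^3=1\Rightarrow\gvp_{a,c}=\frac{\eta}{2}$, which in the paper is Lemma \ref{lem matsuo}(2), resting on \cite[Proposition 4.8]{hrs} for $\eta\neq\half$ and on results of \cite{hss} together with injectivity for $\eta=\half$; your proposal never establishes (or even identifies) this step. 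It can in fact be extracted from the symmetry you allude to — the order-$3$ element $\gt_a\gt_c$ permutes the three axes $a$, $c$, $c^{\gt_a}=a^{\gt_c}$ cyclically (here injectivity is needed), and applying it to the relation $ac=\gvp_{a,c}a+\frac{\eta}{2}(c-c^{\gt_a})$ and comparing coefficients forces $\gvp_{a,c}=\frac{\eta}{2}$ — but as written the proposal leaves this pivotal identification of the scalar unproved, so the verification of multiplicativity in the order-$3$ case is incomplete.
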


\begin{remark}
Theorem \ref{thm matsuo} was proved in \cite[Theorem 6.3]{hrs} for $\eta\ne\half$.
The proof for $\eta=\half$ needed a correction, in view of \cite{hss}.
Note that the summand $\oplus_{i\in I}\ff$ does not appear in
Theorem \ref{thm matsuo} since we are assuming that $\gD_{\cala}$ is connected.
We also mention that for $\eta\ne\half,$ the map on $\cala$ defined by $a\mapsto\gt_a$
is always injective, by \cite[Proposition 6.5]{hss}, and since $\gD_{\cala}$
is connected.

We included a proof of Theorem \ref{thm matsuo} for all $\eta$ for completeness.
\end{remark}

\begin{lemma}\label{lem ab}
$ab=\frac{\eta}{2}a+\gvp_{a,b} b-\frac{\eta}{2}a^{\gt_b},$ for all $a, b\in\cala$. 
\end{lemma}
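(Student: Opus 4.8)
The plan is to compute the product $ab$ inside the two-generated subalgebra $N_{a,b}$, using the classification of such subalgebras from Lemma~\ref{lem 2dim} and Proposition~\ref{prop 3dim}, and to check in each case that the asserted formula holds with the correct value of $\gvp_{a,b}$. Recall that for an axis $b$ the Miyamoto involution $\gt_b$ fixes $A_+(b)$ pointwise and negates $A_\eta(b)$; hence for any $a$ we may write $a = a_+ + a_-$ with $a_+\in A_+(b)$, $a_-\in A_\eta(b)$, and $a^{\gt_b} = a_+ - a_-$, so that $a - a^{\gt_b} = 2a_-$, i.e.\ $a_- = \tfrac12(a - a^{\gt_b})$. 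The claimed identity is therefore equivalent to
\[
ab = \tfrac{\eta}{2}(a - a^{\gt_b}) + \bigl(\tfrac{\eta}{2} + \gvp_{a,b}\bigr) b,
\]
and since $a_- = \tfrac12(a-a^{\gt_b})$ lies in $A_\eta(b)$, we have $a_- b = \eta a_-$; so what must be shown is that the $A_+(b)$-component of $ab$ equals $\bigl(\tfrac{\eta}{2}+\gvp_{a,b}\bigr)b$, i.e.\ that $a_+ b$ has this form. Because $A_1(b) = \ff b$ (absolute primitivity) and $A_0(b)b = 0$, writing $a_+ = \beta b + a_0$ with $a_0\in A_0(b)$ gives $a_+ b = \beta b$ immediately, so the entire content is the identification $\beta = \tfrac{\eta}{2} + \gvp_{a,b}$.

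Next I would split into the cases of Lemma~\ref{lem 2dim} and Proposition~\ref{prop 3dim}. If $ab = 0$ then $a\in A_0(b)$, so $a_- = 0$, $a = a^{\gt_b}$, and the formula reads $0 = \gvp_{a,b} b$; here one takes $\gvp_{a,b} = 0$ (consistent with Proposition~\ref{prop 3dim}, which only assigns $\gvp_{a,b}$ in the $3$-dimensional case, the $2$-dimensional degenerate cases being handled by direct inspection). In case (2) of Lemma~\ref{lem 2dim}, $\eta = -1$ and $ab = -a - b$; comparing with the target $ab = \tfrac{\eta}{2}(a-a^{\gt_b}) + (\tfrac{\eta}{2}+\gvp_{a,b})b = -\tfrac12 a + \tfrac12 a^{\gt_b} + (-\tfrac12 + \gvp_{a,b})b$ forces $\tfrac12 a^{\gt_b} = -\tfrac12 a$, consistent with the known structure of $3C(-1)^{\times}$, and fixes $\gvp_{a,b}$. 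In case (3), $\eta = \tfrac12$ and $ab = \tfrac12 a + \tfrac12 b$, which matches the target with $\gvp_{a,b} = \tfrac12$ once one checks $a^{\gt_b} = a$ in $J_{a,b}$ (indeed $N_{a,b}$ has no identity and $a-b\in A_{1/2}(b)$... — here a short direct computation in the two-dimensional algebra $J_{a,b}$ settles it). Finally, for the $3$-dimensional algebra $N_{a,b} = B(\eta,\gvp)_{a,b}$ of Proposition~\ref{prop 3dim}, we have $ab = \gs + \eta a + \eta b$ with $\gs v = \pi v$ for $v\in\{a,b,\gs\}$ and $\pi = (1-\eta)\gvp - \eta$; the plan is to express $\gs$ in the basis $\{a, b, a^{\gt_b}\}$ (or equivalently in $A_+(b)\oplus A_\eta(b)$) and read off the $b$-coefficient, which a routine linear-algebra computation using $\gs b = \pi b$ and $\gs a = \pi a$ should give as $\gvp - \tfrac{\eta}{2}$, so that $ab = \eta a + \eta b + \gs$ rearranges to the stated form.

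The main obstacle is the $3$-dimensional case: one needs to pin down the decomposition of $a$ (and of $\gs$) relative to the eigenspaces of $\mathrm{ad}_b$ precisely enough to extract the $b$-coefficient, and to verify that the scalar that emerges is exactly the $\gvp_{a,b}$ of Proposition~\ref{prop 3dim} rather than some other normalization. Concretely, one computes $a - a^{\gt_b} \in A_\eta(b)$, hence $b(a - a^{\gt_b}) = \eta(a - a^{\gt_b})$; combined with $ba = \gs + \eta a + \eta b$ and the relation $\gs a = \pi a$, one solves for $a^{\gt_b}$ and thus for $\gs$ in terms of $a$, $b$, and known scalars. Keeping the book-keeping with $\pi = (1-\eta)\gvp - \eta$ straight is the only delicate point; everything else is forced by absolute primitivity and the fusion rules already recorded above. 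I expect the whole argument to be a few lines once the $3$-dimensional computation is organized around the identity $a_- = \tfrac12(a - a^{\gt_b})$.
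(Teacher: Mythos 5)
Your overall strategy --- decompose $a$ with respect to the eigenspaces of ${\rm ad}_b$, use $a_\eta=\half(a-a^{\gt_b})$, and then verify the resulting coefficient identity case by case over the classification of $N_{a,b}$ --- is sound and essentially parallel to the paper's proof, which also runs through Lemma \ref{lem 2dim} and Proposition \ref{prop 3dim} (quoting, in the $3$-dimensional case, the formula for $a^{\gt_b}$ from \cite[Theorem 3.1.3(6)]{hss}). However, your execution has an algebra slip at the very first step which then infects every case. Grouping terms in the asserted identity gives $\frac{\eta}{2}a+\gvp_{a,b}b-\frac{\eta}{2}a^{\gt_b}=\frac{\eta}{2}(a-a^{\gt_b})+\gvp_{a,b}\,b$, i.e.\ the coefficient of $b$ is $\gvp_{a,b}$, not $\frac{\eta}{2}+\gvp_{a,b}$. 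Since $\frac{\eta}{2}(a-a^{\gt_b})=\eta a_\eta$ and $ab=\gvp_b(a)\,b+\eta a_\eta$, what actually has to be shown is $\gvp_b(a)=\gvp_{a,b}$; your reduction instead demands $\gvp_b(a)=\frac{\eta}{2}+\gvp_{a,b}$, which is false.

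The slip propagates into the case checks, several of which are wrong as stated. In case (2) of Lemma \ref{lem 2dim} (the algebra $3C(-1)^{\times}_{a,b}$) you conclude $a^{\gt_b}=-a$; in fact $a+\half b\in A_{-1}(b)$, so $a^{\gt_b}=-a-b$ and $\gvp_{a,b}=-\half$. In case (3) (the algebra $J_{a,b}$) you assert $a^{\gt_b}=a$ and $\gvp_{a,b}=\half$; but your own observation that $a-b\in A_{1/2}(b)$ forces $a^{\gt_b}=2b-a$, and the correct value is $\gvp_{a,b}=1$ (with these values the stated formula does check out). In the $3$-dimensional case the $b$-coefficient of $\gs$ in the basis $\{a,b,a^{\gt_b}\}$ is $\gvp-\eta$, that is, $\gs=-\frac{\eta}{2}a-(\eta-\gvp)b-\frac{\eta}{2}a^{\gt_b}$ (exactly what the paper extracts from \cite[Theorem 3.1.3(6)]{hss}), not $\gvp-\frac{\eta}{2}$, so with your target coefficient the computation cannot close. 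The plan is salvageable --- once the coefficient is corrected it reduces the lemma to the single identity $\gvp_b(a)=\gvp_{a,b}$, which is precisely how $\gvp_{a,b}$ is interpreted after Lemma \ref{symmetric} --- but as written the verification fails in each nontrivial case.
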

\begin{proof}
Clearly this holds when $a=b,$ so assume $a\ne b$.
Suppose first that $N_{a,b}$ is $2$-dimensional.  We use \cite[Lemma 3.1.2]{hss}.
If $N_{a,b}=2B_{a,b},$ then $ab=0, \gvp_{a,b}=0,$ and $a^{\gt_b}=a$ (see also \cite[Lemma 3.2.1]{hss}),
so the claim holds.

Suppose next that $N_{a,b}=3C(-1)^{\times}_{a,b}$.  Then $\eta=-1, ab=-a-b, \gvp_{a,b}=-\half$
and $a^{\gt_b}=-a-b$ (see also \cite[Lemma 3.1.8]{hss}), so the claim holds.

Assume that $N_{a,b}=J_{a,b}$.  Then $\eta=\half, ab=\half a+\half b, \gvp_{a,b}=1$ and
$a^{\gt_b}=2b-a$ (see also \cite[Lemma 3.1.9]{hss}), so again the claim holds.
 
We may assume that $N_{a,b}$ is $3$-dimensional.  Set $\gvp:=\gvp_{a,b}$. By\linebreak \cite[Theorem 3.1.3(6)]{hss}, 
$a^{\tau(b)}=-\frac{2}{\eta}\gs-\frac{2(\eta-\gvp)}{\eta}b-a$.
Also, $\gs=ab-\eta a-\eta b$.
Hence we get
\begin{alignat*}{3}
\textstyle{\frac{2}{\eta}\gs} &\textstyle{=-a -\frac{2(\eta-\gvp)}{\eta}b-a^{\gt_b}}& &\iff\\
\gs&=\textstyle{-\frac{\eta}{2}a-(\eta-\gvp)b-\frac{\eta}{2}a^{\gt_b}} & &\iff\\
ab&=\textstyle{\frac{\eta}{2}a+\gvp b-\frac{\eta}{2}a^{\gt_b}.}\qedhere
\end{alignat*}
\end{proof}

\begin{cor}[See Corollary 1.2 in \cite{hrs}]\label{cor gen}
$A$ is spanned over $\ff$ by $\cala$.
\end{cor}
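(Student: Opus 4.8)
The plan is to deduce Corollary \ref{cor gen} directly from Lemma \ref{lem ab}. Since $A$ is generated by the closed set $\cala$ of $\eta$-axes, it suffices to show that the $\ff$-span $U$ of $\cala$ is a subalgebra; because $U$ contains the generating set, this forces $U=A$. So I would set $U:=\mathrm{Span}_\ff(\cala)$ and check that $U$ is closed under multiplication.

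First I would note that multiplication in $A$ is bilinear, so to verify $UU\subseteq U$ it is enough to check that $ab\in U$ for all pairs of basis-type elements $a,b\in\cala$. For $a=b$ this is immediate since $a^2=a\in\cala\subseteq U$ (an axis is in particular an idempotent). For $a\ne b$, Lemma \ref{lem ab} gives the explicit identity
\[
ab=\tfrac{\eta}{2}\,a+\gvp_{a,b}\,b-\tfrac{\eta}{2}\,a^{\gt_b},
\]
and since $\cala$ is closed, $a^{\gt_b}\in\cala$; hence the right-hand side is an $\ff$-linear combination of elements of $\cala$, so $ab\in U$. This establishes $UU\subseteq U$, and as $U$ contains all the generators of $A$ we conclude $A=U$, i.e.\ $A$ is spanned over $\ff$ by $\cala$.

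There is essentially no obstacle here: the content is entirely carried by Lemma \ref{lem ab} together with the closure of $\cala$ under the Miyamoto involutions. The only point to be a little careful about is the reduction from ``span is a subalgebra'' to ``$A$ equals the span'': one uses that $A$ is \emph{generated} (as an algebra) by $\cala$, so the smallest subalgebra containing $\cala$ is all of $A$, and $U$, being a subalgebra containing $\cala$, must therefore coincide with $A$. I would state this in one line and not belabor it.
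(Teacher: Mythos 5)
Your argument is correct and is exactly the one the paper intends: its one-line proof ("immediate from Lemma \ref{lem ab} and the definition of a closed set of axes") is precisely your observation that $\mathrm{Span}_\ff(\cala)$ is closed under multiplication by Lemma \ref{lem ab} together with $a^{\gt_b}\in\cala$, hence equals the algebra generated by $\cala$. Nothing is missing; you have merely written out the details the paper leaves implicit.
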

\begin{proof}
This is immediate from Lemma \ref{lem ab} and the definition of a closed set of axes.
\end{proof}

\begin{lemma}\label{lem matsuo}
Suppose that 
\[\tag{$*$}
\text{the map $a\mapsto \gt_a$
on $\cala$ is injective.}
\]
Let $a, b\in\cala$ be distinct.  Then
\begin{enumerate}
\item
if $(\gt_a\gt_b)^2=1,$ then $ab=0$.

\item
if $(\gt_a\gt_b)^3=1,$ then $\gvp_{a,b}=\frac{\eta}{2}$.
\end{enumerate}
\end{lemma}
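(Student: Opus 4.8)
The strategy is to combine Lemma~\ref{lem ab} with the structural information about $2$-generated subalgebras in Lemma~\ref{lem 2dim} and Proposition~\ref{prop 3dim}, reading off the value of $\gvp_{a,b}$ from the order of $\gt_a\gt_b$. Throughout, the injectivity hypothesis $(*)$ is what prevents the degenerate collapses $a=b$ or $a=a^{\gt_b}$ from occurring spuriously.

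For part~(1), suppose $(\gt_a\gt_b)^2=1$, i.e.\ $\gt_a$ and $\gt_b$ commute. First I would argue that $a^{\gt_b}\in\{a,b\}$ is impossible unless $ab=0$: if $a^{\gt_b}=a$ then by \cite[Lemma 3.2.1]{hss} we already get $ab=0$ and we are done, while $a^{\gt_b}=b$ would force $\gt_b$ to swap $a$ and $b$ and, via the relation $\gt_{a^{\gt_b}}=\gt_b\gt_a\gt_b$, would give $\gt_a=\gt_b$ after using commutativity, contradicting $(*)$. So it suffices to rule out the $3$-dimensional cases and the $3C(-1)^\times$ case. In those cases one computes $(\gt_a\gt_b)$ acting on the (at most $3$-dimensional) subalgebra $N_{a,b}$ and checks its order is $3$, not $2$; concretely, from Lemma~\ref{lem ab} the axes $a,b,a^{\gt_b},b^{\gt_a}$ span $N_{a,b}$, and one tracks how $\gt_a$ and $\gt_b$ permute this small set of axes. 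The only configuration with $(\gt_a\gt_b)^2=1$ surviving is $ab=0$, i.e.\ $N_{a,b}=2B_{a,b}$.

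For part~(2), suppose $(\gt_a\gt_b)^3=1$ but $\gt_a\ne\gt_b$, so $\gt_a\gt_b$ has order exactly $3$. By part~(1) this rules out $ab=0$, so $N_{a,b}$ is one of the non-trivial types. The key computation is to apply Lemma~\ref{lem ab} twice, symmetrically: we have $ab=\frac{\eta}{2}a+\gvp_{a,b}b-\frac{\eta}{2}a^{\gt_b}$ and also, swapping roles, $ba=\frac{\eta}{2}b+\gvp_{b,a}a-\frac{\eta}{2}b^{\gt_a}$; since the algebra is commutative, $ab=ba$. Now $(\gt_a\gt_b)^3=1$ means the $\gt$-orbit of $a$ under $\lan\gt_a,\gt_b\ran$ is small, and tracking the dihedral action one identifies $a^{\gt_b}$ and $b^{\gt_a}$ as the ``third'' axis of this orbit (equivalently $a^{\gt_b}$, $b$, $a$ and $a^{\gt_b}$, $b^{\gt_a}$ are related by the order-$3$ rotation). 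Equating the two expressions for $ab$ and matching coefficients against the basis of $N_{a,b}$ (using $\gvp_{a,b}=\gvp_{b,a}$ by symmetry of the construction, or deducing it), the $a$- and $b$-coefficients force $\gvp_{a,b}=\frac{\eta}{2}$. In the language of Proposition~\ref{prop 3dim}, the order-$3$ condition on $\gt_a\gt_b$ pins down $\pi_{a,b}=(1-\eta)\gvp_{a,b}-\eta$ to the value it takes in the Matsuo-algebra subalgebra $3C(\eta)$, namely the value forcing $\gvp_{a,b}=\eta/2$.

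\textbf{Main obstacle.}
The delicate point is (2): one must show that $(\gt_a\gt_b)^3=1$ genuinely constrains the algebra, not merely the group, and for that one needs to know precisely how $\gt_a$ and $\gt_b$ act on the third axis of the dihedral orbit. The risk is a $2$-dimensional $N_{a,b}$ of type $3C(-1)^\times$ slipping through when $\eta=-1$ --- but there $\gvp_{a,b}=-\half=\frac{\eta}{2}$ already, so the conclusion still holds; one should note this case explicitly rather than dismissing it. So the real work is the coefficient-matching in the $3$-dimensional case $B(\eta,\gvp)_{a,b}$, where one must use the formula for $a^{\gt_b}$ from \cite[Theorem 3.1.3(6)]{hss} together with the order-$3$ relation to eliminate $\gs$ and solve for $\gvp$.
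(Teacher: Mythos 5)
There is a genuine gap in your part (1). Your plan for the main case is to compute the order of $\gt_a\gt_b$ on $N_{a,b}$ and ``check it is $3$, not $2$'' in the $3$-dimensional cases, but that claim is false: in $B(\eta,\eta)$ (i.e.\ $\gvp_{a,b}=\eta$, so $a^{\gt_b}=-a-\frac{2}{\eta}\gs$ and $b^{\gt_a}=-b-\frac{2}{\eta}\gs$, with $\gs$ fixed by both involutions) one checks directly that $(\gt_a\gt_b)^2=1$ on $N_{a,b}$ even though $ab\neq 0$. This is realized concretely in a Jordan algebra of Clifford type with $B(u,v)=0$: there $a=\half\e+u$, $b=\half\e+v$ generate a $3$-dimensional subalgebra, $\gt_a$ and $\gt_b$ commute, and $ab\ne 0$. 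So no computation inside $N_{a,b}$ can prove (1); the hypothesis $(*)$ must be used globally, exactly as in the paper's one-line proof (which cites \cite[Lemmas 3.2.7(2), 3.1.6(2)]{hss}): from $(\gt_a\gt_b)^2=1$ one gets $\gt_{a^{\gt_b}}=\gt_b\gt_a\gt_b=\gt_a$, and since $\cala$ is closed, $a^{\gt_b}\in\cala$, so $(*)$ forces $a^{\gt_b}=a$, whence $ab=0$. (In the Clifford example above this is consistent because $a^{\gt_b}=\e-a\neq a$ has $\gt_{\e-a}=\gt_a$, so $(*)$ fails there.) Note also that your case split omits $N_{a,b}=J_{a,b}$, where $a^{\gt_b}=2b-a\notin\{a,b\}$; the global argument just given covers it, your local one would need a separate check. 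You use $(*)$ only to exclude $a^{\gt_b}=b$, which is not where its real work lies.

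Part (2) is closer to correct, and is a genuinely different route from the paper (which simply cites \cite[Proposition 4.8]{hrs} for $\eta\ne\half$ and \cite[Lemma 3.2.7(1), Corollary 3.3.2]{hss} for $\eta=\half$): equating $ab=\frac{\eta}{2}a+\gvp_{a,b}b-\frac{\eta}{2}a^{\gt_b}$ with $ba=\frac{\eta}{2}b+\gvp_{b,a}a-\frac{\eta}{2}b^{\gt_a}$ and using Lemma \ref{symmetric} gives $\frac{\eta}{2}(a^{\gt_b}-b^{\gt_a})=(\frac{\eta}{2}-\gvp_{a,b})(a-b)$, so everything hinges on $a^{\gt_b}=b^{\gt_a}$. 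But this identification is not a consequence of ``tracking the dihedral action'' on axes alone; without injectivity the orbits of $a$ and $b$ under $\lan\gt_a,\gt_b\ran$ need not meet. The correct justification is again $(*)$: $(\gt_a\gt_b)^3=1$ gives $\gt_b\gt_a\gt_b=\gt_a\gt_b\gt_a$, hence $\gt_{a^{\gt_b}}=\gt_{b^{\gt_a}}$, and closedness plus $(*)$ yield $a^{\gt_b}=b^{\gt_a}$, after which linear independence of the distinct axes $a,b$ forces $\gvp_{a,b}=\frac{\eta}{2}$, uniformly in all types (including $3C(-1)^{\times}$ and $J_{a,b}$, where it can also be read off directly). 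Two smaller slips: ``by part (1) this rules out $ab=0$'' is the converse of what part (1) says --- the correct point is that $ab=0$ makes $\gt_a,\gt_b$ commute, so order dividing both $2$ and $3$ would give $\gt_a=\gt_b$, against $(*)$; and your first subcase of (1) needs $\gt_{a^{\gt_b}}=\gt_b$ rather than commutativity to conclude $\gt_a=\gt_b$. With the explicit uses of $(*)$ inserted as above, your argument for (2) becomes a clean self-contained alternative to the paper's citations; your argument for (1) needs to be replaced, not repaired.
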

\begin{proof}
(1):\quad By \cite[Lemmas 3.2.7(2) and 3.1.6(2)]{hss} and by $(*),$
$N_{a,b}=2B_{a,b},$ so (1) holds (see also \cite[Lemma 3.1.2(1a)]{hss}).
\medskip

\noindent
(2):\quad If $\eta\ne\half,$ then (2) follows from \cite[Proposition 4.8]{hrs}.
So suppose $\eta=\half$.  By \cite[Lemma 3.2.7(1) and Corollary 3.3.2]{hss} 
and by $(*),$ we get $\gvp_{a,b}=\frac{1}{4}.$
\end{proof}

We can now prove Theorem \ref{thm matsuo}.
 
\begin{proof}[Proof of Theorem \ref{thm matsuo}]\hfill
\medskip

Set $M:=M_{\frac{\eta}{2}}(G,D)$.  We claim that the map 
\[
f\colon M\to A:\ \gt_a\mapsto a,
\]
extended by linearity is a  surjective algebra homomorphism. 
Note that $f$ is well defined
since the map $a\mapsto\gt_a$ is injective on $\cala$.
 
Now $f$ is surjective by Corollary \ref{cor gen}.
Next we need to check
that 
\[\tag{$*$}
f(\gt_a\cdot\gt_b)=ab,\text{ for all }a,b\in\cala.
\]  
If $a=b,$ then
$\gt_a\cdot\gt_b=\gt_a$, and $ab=a,$ so $(*)$ holds.

If $|\gt_a\gt_b|=2,$ then $\gt_a\cdot\gt_b=0,$ while by Lemma \ref{lem matsuo}(1),
$ab=0,$ so $(*)$ holds in this case as well.

Finally assume that $|\gt_a\gt_b|=3$.  Then 
\[\textstyle{
\gt_a\cdot\gt_b=\frac{\eta}{2}(\gt_a+\gt_b-\gt_a^{\gt_b})=\frac{\eta}{2}(\gt_a+\gt_b-\gt_{_{a^{\gt_b}}}),}
\]
where the last equality follows from the standard fact that $\gt_a^{\gt_b}=\gt_{_{a^{\gt_b}}}$.
Thus $f(\gt_a\cdot\gt_b)=\frac{\eta}{2}(a+b-a^{\gt_b})$.  However, by Lemma \ref{lem matsuo}(2) and Lemma \ref{lem ab},
$ab=\frac{\eta}{2}(a+b-a^{\gt_b}),$ so $(*)$ holds in this case as well and the proof of the
theorem is complete.
\end{proof}

\section{The existence of a Frobenius form}

Recall that a non-zero bilinear form $(\cdot\, ,\, \cdot)$ on an algebra $A$ 
is called {\bf Frobenius} if the form associates with the algebra product, 
that is, 
\[(ab,c)=(a,bc)\]
for all $a,b,c\in A$.

For primitive axial algebras of Jordan type $\eta,$ we specialize the 
concept of Frobenius form further by asking that the condition 
$(a,a)=1$ be satisfied for each $\eta$-axis $a$.

The purpose of this section is to prove the following theorem:

\begin{thm}\label{thm frob}
Let $A$ be a   primitive axial algebra of Jordan type $\eta$.
Then $A$ admits a Frobenius form.
\end{thm}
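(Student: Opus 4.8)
The plan is to construct the Frobenius form directly on the generators and then show it extends consistently to all of $A$. Since $A$ is generated by the set of $\eta$-axes and, by the axial structure, every element is a linear combination of products of axes, a natural first move is to define $(a,b)$ for two $\eta$-axes $a,b$ using the projection of $ab$ onto the $1$-eigenspace $A_1(a)=\ff a$. Concretely, for an $\eta$-axis $a$ and any $x\in A$, write $x=x_1+x_0+x_\eta$ in the Peirce decomposition with respect to $a$, and set $\lambda_a(x)\in\ff$ by $x_1=\lambda_a(x)\,a$; one then puts $(a,x):=\lambda_a(x)$, so in particular $(a,a)=1$ as required. The first task is to check that this is symmetric, i.e.\ $(a,b)=(b,a)$ for two $\eta$-axes; this should follow from the explicit classification of $2$-generated subalgebras in Lemma \ref{lem 2dim} and Proposition \ref{prop 3dim}, since in each case $N_{a,b}$ is small and the projections can be computed outright (e.g.\ in $B(\eta,\gvp)_{a,b}$ one uses $ab=\gs+\eta a+\eta b$ together with the action of $\gs$ on $a,b,\gs$).

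The second task is to show that the form so defined on $\cala\times\cala$ extends to a well-defined bilinear form on all of $A$ that associates with the product. The cleanest route is a spanning/coherence argument: since $A$ is spanned by products of axes, define $(x,y)$ by bilinearity from the values $(a,b)$, and then verify the associativity identity $(ab,c)=(a,bc)$ first for $a,b,c\in\cala$ and propagate. Checking $(ab,c)=(a,bc)$ for three axes again reduces, via the list of possible $N_{a,b}$, $N_{b,c}$, $N_{a,c}$, to a finite case analysis inside subalgebras of dimension at most $3$; here the key algebraic input is that for an $\eta$-axis $a$ the adjoint ${\rm ad}_a$ is self-adjoint with respect to $(\cdot,\cdot)$ on each $2$- or $3$-generated subalgebra, which forces $(xa,y)=(x,ya)$ and then, since $A$ is generated by axes, $(xy,z)=(x,yz)$ in general by a straightforward induction on the length of a monomial expression for the arguments.

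The main obstacle I expect is \emph{well-definedness}: an element of $A$ may be written as a linear combination of axes (or of products of axes) in many ways, and the naive definition $(a,x):=\lambda_a(x)$ is a priori tied to the choice of axis $a$ in the first slot. To handle this, I would fix one generating axis $a_0$ and define $(x,y)$ for \emph{all} $x\in A$ by a single recipe relative to $a_0$ is too weak; instead, I would argue that the assignment $(a,b)\mapsto$ (the common value computed inside $N_{a,b}$) is consistent on overlaps of $2$-generated subalgebras, invoking that intersections $N_{a,b}\cap N_{a,c}$ lie in a $3$-dimensional ambient subalgebra (as used repeatedly in \S2, e.g.\ in the proof of Proposition \ref{prop e}) so that the two computed values of $(a,\cdot)$ agree on the overlap. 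Once symmetry and local associativity are in hand on all $2$- and $3$-generated subalgebras, a limiting/union argument over the directed system of finitely-generated subalgebras gives a single bilinear form on $A$ with $(ab,c)=(a,bc)$ and $(a,a)=1$ for every $\eta$-axis $a$; non-vanishing is automatic since $(a,a)=1$.

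\begin{proof}

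\end{proof}
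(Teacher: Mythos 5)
Your starting point coincides with the paper's: define the form via the projection onto $\ff a$, i.e.\ $(a,x)=\gvp_a(x)$, with symmetry coming from $\gvp_a(b)=\gvp_b(a)$ (the paper simply quotes Lemma 4.4 of \cite{hrs} for this, rather than redoing the $2$-generated case analysis). But the two later stages of your outline have genuine gaps, and both trace back to the same missing ingredient: Corollary \ref{cor gen}, which says $A$ is spanned \emph{as a vector space} by $\eta$-axes, not merely by products of axes. With that statement, the well-definedness problem you worry about evaporates: one chooses a basis $\calb$ of $A$ consisting of axes, sets $(b,b')=\gvp_b(b')$ on $\calb\times\calb$, and extends bilinearly; symmetry of $\gvp$ then gives $(a,u)=\gvp_a(u)$ for \emph{every} axis $a$ and every $u\in A$. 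Your proposed substitute --- checking consistency ``on overlaps of $2$-generated subalgebras'' and then a ``limiting/union argument over finitely-generated subalgebras'' --- is not a proof: the form is a single global bilinear map, and local computations inside small subalgebras do not assemble into one without a spanning statement of exactly this kind.

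The associativity step as you describe it would also fail. Verifying $(ab,c)=(a,bc)$ for three axes does \emph{not} reduce to subalgebras of dimension at most $3$: the classification in Lemma \ref{lem 2dim} and Proposition \ref{prop 3dim} covers only $2$-generated subalgebras, while $\langle a,b,c\rangle$ can be of larger dimension, and $(ab,c)$ genuinely involves all three generators. Likewise, the proposed ``induction on the length of a monomial'' cannot upgrade self-adjointness of ${\rm ad}_a$ for axes $a$ to the full identity $(xy,z)=(x,yz)$: in a non-associative algebra ${\rm ad}_{uv}$ is not determined by ${\rm ad}_u$ and ${\rm ad}_v$, so self-adjointness does not propagate along products. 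The correct route, and the paper's, is again via spanning by axes: the identity $(ab,c)=(a,bc)$ is linear in $b$, so one may assume $b$ is an axis, decompose $a$ and $c$ into ${\rm ad}_b$-eigenvectors, and use that distinct eigenspaces of ${\rm ad}_b$ are orthogonal. Note that the orthogonality of $A_0(b)$ and $A_\eta(b)$ is itself not free: the paper obtains it from invariance of the form under the Miyamoto involution $\tau_b$ (namely $(u,v)=(u^{\tau_b},v^{\tau_b})=-(u,v)$), and that invariance is proved from $\gvp_{a^\psi}(u^\psi)=\gvp_a(u)$ together with the axis basis --- a step your outline never establishes.
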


The proof of Theorem \ref{thm frob} depends on two properties of primitive
axial algebras of Jordan type.  The first is 
Corollary \ref{cor gen}.
The second is  proven 
in \cite{hrs} (Lemma \ref{symmetric} below).

For an $\eta$-axis $a\in A$, let $\gvp_a$ be the projection function 
with respect to $a$. That is, for $u\in A$, we have that 
$u=\gvp_a(u)a+u_0+u_\eta$, where $u_0$ and $u_\eta$ are eigenvectors 
of the adjoint linear transformation ${\rm ad}_a$ for the eigenvalues 
$0$ and $\eta$, respectively.

\begin{lemma}[Lemma 4.4 in \cite{hrs}]\label{symmetric}
For a primitive axial algebra $A$ of Jordan type and for any $\eta$-axes $a,b\in A$, we have 
$\gvp_a(b)=\gvp_b(a)$.
\end{lemma}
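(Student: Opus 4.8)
The plan is to extract the projection coefficient $\gvp_a(b)$ directly from algebra products and then read it off in each of the possible $2$-generated configurations $N_{a,b}$. The case $a=b$ is trivial, since $\gvp_a(a)=1$; so assume $a\ne b$. The only structural fact I use about $a$ is that it is absolutely primitive, i.e.\ $A_1(a)=\ff a$: this makes the decomposition $b=\gvp_a(b)\,a+b_0+b_\eta$, with $b_0\in A_0(a)$ and $b_\eta\in A_\eta(a)$, well defined with a one-dimensional first summand.

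The key device is the identity obtained by applying ${\rm ad}_a$ to this decomposition. Since $a$ is idempotent, $b_0 a=0$, and $b_\eta a=\eta b_\eta$, we get $ab=\gvp_a(b)\,a+\eta b_\eta$ and then $(ab)a=\gvp_a(b)\,a+\eta^2 b_\eta$, whence
\[
(ab)a-\eta(ab)=(1-\eta)\,\gvp_a(b)\,a.
\]
Because $\eta\ne 1$ and $a\ne 0$, this recovers $\gvp_a(b)$ from the two products $ab$ and $(ab)a$, which are in turn governed by the multiplication table of $N_{a,b}$. The lemma thus reduces to evaluating the left-hand side in each case of Lemma \ref{lem 2dim} and Proposition \ref{prop 3dim}.

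I would run through the four possibilities. For $N_{a,b}=2B_{a,b}$ one has $ab=0$, so the left-hand side vanishes and $\gvp_a(b)=0$. For $3C(-1)^{\times}_{a,b}$, substituting $\eta=-1$ and $ab=-a-b$ gives $(ab)a-\eta(ab)=-a$, hence $\gvp_a(b)=-\tfrac12$. For $J_{a,b}$, substituting $\eta=\half$ and $ab=\half a+\half b$ gives $\half a$, hence $\gvp_a(b)=1$. In the $3$-dimensional case $B(\eta,\gvp)_{a,b}$ I would use $ab=\gs+\eta a+\eta b$ together with $\gs a=\pi a$ to compute $(ab)a-\eta(ab)=(\pi+\eta)a$, and then the definition $\pi=(1-\eta)\gvp-\eta$ collapses this to $(1-\eta)\gvp\,a$, so that $\gvp_a(b)=\gvp=\gvp_{a,b}$. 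In every case the resulting scalar is symmetric in $a$ and $b$: the values $0,-\tfrac12,1$ plainly are, and $\gvp_{a,b}$ is symmetric because $\gs=ab-\eta a-\eta b$, and therefore $\pi$ and $\gvp_{a,b}$, are defined symmetrically in $a$ and $b$. Interchanging $a$ and $b$ thus yields the same number, which is precisely the asserted equality $\gvp_a(b)=\gvp_b(a)$.

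The only step requiring genuine care is the $3$-dimensional case, but the helper identity sidesteps the full eigenvector bookkeeping: one needs only the two relations of Proposition \ref{prop 3dim} and the definition of $\pi$, after which the cancellation is immediate. The main point to verify cleanly is that $\gvp_{a,b}$ is genuinely order-independent; this is built into Proposition \ref{prop 3dim}, since $\gs$ is symmetric and the same scalar $\pi$ satisfies $\gs a=\pi a$ and $\gs b=\pi b$.
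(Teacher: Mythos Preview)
The paper does not give its own proof of this lemma: it is quoted verbatim as \cite[Lemma~4.4]{hrs} and used as a black box in constructing the Frobenius form. So there is nothing in the present paper to compare your argument against.

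Your argument is correct. The identity $(ab)a-\eta(ab)=(1-\eta)\gvp_a(b)\,a$ is derived cleanly from the eigenspace decomposition, and your case-by-case evaluation in $2B_{a,b}$, $3C(-1)^\times_{a,b}$, $J_{a,b}$, and $B(\eta,\gvp)_{a,b}$ checks out; in particular the $3$-dimensional computation $(ab)a-\eta(ab)=(\pi+\eta)a=(1-\eta)\gvp\,a$ is right, and the symmetry of $\gvp_{a,b}$ is indeed built into the definition of $\gs$ and $\pi$ in Proposition~\ref{prop 3dim}. One point worth being explicit about is logical order: in \cite{hrs} this lemma is numbered 4.4 while the $3$-dimensional classification you invoke is Proposition~4.6, so you should be sure (and it is in fact the case) that the classification of $N_{a,b}$ there is obtained by direct computation and does not itself rely on the symmetry $\gvp_a(b)=\gvp_b(a)$. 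Within the present paper both facts are simply imported, so no circularity arises; your derivation is a legitimate self-contained proof from the quoted structure of $N_{a,b}$.
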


Note that the constant $\gvp_{a,b},$ that we used earlier for $\eta$-axes $a,b,$ is the same as $\gvp_a(b)$.
\clearpage

\begin{proof}[Proof of Theorem \ref{thm frob}]\hfill
\medskip

We start by defining the bilinear form $(\cdot\, ,\, \cdot)$ on $A$.
Using Corollary \ref{cor gen} 
we can select a basis $\calb$ of $A$ consisting 
of $\eta$-axes, and we let
\[
(a\, ,\, b)=\gvp_a(b),\text{ for all } a, b\in\calb.
\]
Extending by linearity we get the bilinear form $(\cdot\, ,\, \cdot)$.
Note that 
Lemma \ref{symmetric} implies that 
$(\cdot\, ,\, \cdot)$ is symmetric. 

\begin{lemma}
\begin{enumerate}
\item
$(a\, ,\, u)=\gvp_a(u),$ for all $\eta$-axes $a\in A$ and all $u\in A;$

\item
$(a\, ,\, a)=1,$ for all $\eta$-axes $a\in A;$

\item
$(\cdot\, ,\, \cdot)$ is invariant under automorphisms of $A$.
\end{enumerate}
\end{lemma}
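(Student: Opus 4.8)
The plan is to prove the three claims in order, each reducing to a bilinearity computation together with the defining properties of $\eta$-axes and the symmetry established in Lemma \ref{symmetric}. For part (1), write $u=\sum_{b\in\calb}\gl_b b$ in the chosen basis. Then $(a,u)=\sum_b\gl_b(a,b)=\sum_b\gl_b\gvp_a(b)=\sum_b\gl_b\gvp_b(a)$ by Lemma \ref{symmetric}. On the other hand $\gvp_a$ is a linear functional on $A$ (it is the coordinate of the $A_1(a)=\ff a$ component in the decomposition $A=\ff a\oplus A_0(a)\oplus A_\eta(a)$), so $\gvp_a(u)=\sum_b\gl_b\gvp_a(b)$. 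Thus the two expressions agree provided $\sum_b\gl_b\gvp_a(b)=\sum_b\gl_b\gvp_b(a)$, which is exactly Lemma \ref{symmetric} applied term by term. Hence $(a,u)=\gvp_a(u)$ for every $\eta$-axis $a$ and every $u\in A$. Part (2) is then immediate: $\gvp_a(a)=1$ since $a$ is the $A_1(a)$-component of itself, so $(a,a)=\gvp_a(a)=1$.

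For part (3), let $\gs\in\Aut(A)$. Since $\gs$ is an algebra automorphism, it sends $\eta$-axes to $\eta$-axes: if $a$ satisfies the minimal-polynomial and fusion conditions then so does $a^\gs$, and $\gs$ intertwines ${\rm ad}_a$ with ${\rm ad}_{a^\gs}$, so $\gs$ carries the eigenspace decomposition for $a$ to that for $a^\gs$. Consequently $\gvp_{a^\gs}(u^\gs)=\gvp_a(u)$ for all $u\in A$: applying $\gs$ to $u=\gvp_a(u)a+u_0+u_\eta$ gives $u^\gs=\gvp_a(u)a^\gs+u_0^\gs+u_\eta^\gs$ with $u_0^\gs\in A_0(a^\gs)$, $u_\eta^\gs\in A_\eta(a^\gs)$, which is precisely the decomposition of $u^\gs$ with respect to $a^\gs$. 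Now for $\eta$-axes $a,b$ we compute, using part (1), that $(a^\gs,b^\gs)=\gvp_{a^\gs}(b^\gs)=\gvp_a(b)=(a,b)$; extending bilinearly over the basis $\calb$ (and using that $\{c^\gs:c\in\calb\}$ spans, though in fact we only need the identity on the basis itself and bilinearity on both sides) yields $(u^\gs,w^\gs)=(u,w)$ for all $u,w\in A$.

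The main obstacle — really the only substantive point, since everything above is bookkeeping once Lemma \ref{symmetric} is in hand — is to verify the Frobenius associativity $(ab,c)=(a,bc)$, presumably the next item after this lemma. Granting the present lemma, the strategy for that would be: it suffices by bilinearity to check it for $a,b,c$ ranging over the basis of $\eta$-axes (or even to fix $a$ an $\eta$-axis and let $b,c$ be arbitrary), and then use part (1) to rewrite $(ab,c)=\gvp_c(ab)$ and $(a,bc)=\gvp_a(bc)$, reducing the identity to a statement about projection functions that can be checked inside the at-most-$3$-dimensional $2$-generated subalgebras $N_{b,c}$, $N_{a,b}$, $N_{a,c}$ classified in Lemma \ref{lem 2dim} and Proposition \ref{prop 3dim}. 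The delicate case is when some of these subalgebras are $3$-dimensional and one must track the scalar $\gvp_{a,b}$ and the element $\gs_{a,b}$; but for the lemma as stated — parts (1), (2), (3) — no case analysis is needed and the proof is the three short arguments given above.
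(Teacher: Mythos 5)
Your overall route is the same as the paper's (linearity of $\gvp_a$, Lemma \ref{symmetric}, and the fact that automorphisms intertwine the eigenspace decompositions), but your proof of part (1) has a genuine gap at exactly the point where the paper has to work: the equality $(a,b)=\gvp_a(b)$ for $b\in\calb$, which you use in the chain $(a,u)=\sum_b\gl_b(a,b)=\sum_b\gl_b\gvp_a(b)$, is the \emph{definition} of the form only when $a$ itself belongs to the chosen basis $\calb$. The lemma, however, is asserted for \emph{all} $\eta$-axes $a\in A$, and for $a\notin\calb$ the value $(a,b)$ is defined by expanding $a$ in the basis, so the identity $(a,b)=\gvp_a(b)$ is a special case of the very statement being proved, not something you may assume. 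Your subsequent appeal to Lemma \ref{symmetric} (``applied term by term'') does not repair this, because it compares $\sum_b\gl_b\gvp_a(b)$ with $\sum_b\gl_b\gvp_b(a)$ only after the unjustified step has already been taken. The paper closes this gap with a two-stage argument: first (1) is proved for $a\in\calb$ by pure linearity; then for an axis $a\notin\calb$ and $b\in\calb$ one writes $\gvp_a(b)=\gvp_b(a)$ (Lemma \ref{symmetric}), $\gvp_b(a)=(b,a)$ (case already proved, since $b\in\calb$), and $(b,a)=(a,b)$ (symmetry of the form, which holds because it holds on basis pairs), giving $(*)$ for $a$ and hence (1) in general.

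Note that the omission propagates: your part (2) needs (1) for arbitrary axes (an $\eta$-axis need not be a basis vector), and your part (3) applies (1) to the axes $a^\gs$ with $a\in\calb$, which in general do not lie in $\calb$ either. Once (1) is established in full generality as above, your arguments for (2) and (3) go through and coincide with the paper's; the fix is short, but it is the essential content of this lemma rather than bookkeeping.
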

\begin{proof}
(1\&2):\quad 
Let $a$ be an $\eta$-axis and suppose that 
\[\tag{$*$}
\gvp_a(b)=(a,b), \text{ for all }b\in\calb.
\] 
Since $\gvp_a$ is linear, 
\begin{gather*}
\gvp_a(u)=
\gvp_a(\sum_{b\in\calb}\ga_b b)=\sum_{b\in\calb}\ga_b\gvp_a(b)\\
=\sum_{b\in\calb}\ga_b(a,b)
=(a,\sum_{b\in\calb}\ga_b b)=(a,u),
\end{gather*} 
and (1) holds for $a$.  Now if $a\in\calb,$ then $(*)$ holds by definition, so (1) holds for $a$.
Suppose $a\notin\calb$.  Let 
$b \in \calb$.  
Then $\gvp_a(b)=\gvp_b(a),$ by Lemma \ref{symmetric},
and $\gvp_b(a)=(b,a),$ as (1) holds for $b$.  Finally, since $(\cdot\, ,\, \cdot)$
is symmetric $(b,a)=(a,b),$ so $\gvp_a(b)=(a,b),$ and $(*)$ holds for any $\eta$-axis
$a$.  This shows that (1) holds. 

In particular, for every $\eta$-axis $a\in A$, we 
have that $(a,a)=1$, since, clearly, 
$\gvp_a(a)=1$. Thus (2) holds. 
\medskip

\noindent
(3):\quad  
Let $\psi\in\Aut(A)$, if $u=\gvp_a(u)a+u_0+u_\eta$ is the decomposition 
of $u\in A$ with respect to the $\eta$-axis $a,$ then 
$u^\psi=\gvp_a(u)a^\psi+u_0^\psi+u_\eta^\psi$ is the decomposition 
of $u^\psi$ with respect to the $\eta$-axis $a^\psi$. Hence 
$\gvp_{a^\psi}(u^\psi)=\gvp_a(u)$, and so $(a^\psi,u^\psi)=(a,u)$. 
Finally, taking an arbitrary $v\in A$ and decomposing it with respect 
to the basis $\calb$ as $v=\sum_{b\in\calb}\ga_b b$, we get that 
$(v^\psi,u^\psi)=(\sum_{b\in\calb}\ga_b b^\psi,u^\psi)=
\sum_{b\in\calb}\ga_b(b^\psi,u^\psi)=\sum_{b\in\calb}\ga_b(b,u)=
(\sum_{b\in\calb}\ga_b b,u)=(v,u)$. So indeed, $(\cdot\, ,\, \cdot)$ is 
invariant under the automorphisms of $A$. 
\end{proof}

\begin{lemma}
For every $\eta$-axis $a\in A$, 
different eigenspaces of ${\rm ad}_a$ are orthogonal with respect to 
$(\cdot\, ,\, \cdot)$. 
\end{lemma}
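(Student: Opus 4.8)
The plan is to show that for an $\eta$-axis $a$ and scalars $\gl\ne\mu$ in $\{1,0,\eta\}$, the spaces $A_\gl(a)$ and $A_\mu(a)$ are orthogonal under $(\cdot\,,\,\cdot)$. The standard trick for Frobenius-type forms is to exploit associativity of the form together with the fact that $a$ is an idempotent acting as $\gl$ (resp.\ $\mu$) on the two eigenvectors. Concretely, I would take $u\in A_\gl(a)$ and $w\in A_\mu(a)$ and compute $(au,w)$ in two ways: on one hand $au=\gl u$, so $(au,w)=\gl(u,w)$; on the other hand, by the associativity $(au,w)=(u,aw)=\mu(u,w)$. Hence $(\gl-\mu)(u,w)=0$, and since $\gl\ne\mu$ and $\charc\ff\ne 2$ (so in particular $\gl-\mu$ is invertible in all three relevant cases, as $1,0,\eta$ are pairwise distinct), we conclude $(u,w)=0$.

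The only gap in that argument is that we have not yet established that $(\cdot\,,\,\cdot)$ is associative — that is precisely Theorem~\ref{thm frob}, whose proof this lemma is a step toward. So I would instead use what is already available: part~(1) of the preceding lemma, which says $(a,u)=\gvp_a(u)$ for every $\eta$-axis $a$ and every $u\in A$. Using this, the orthogonality statement becomes almost tautological for the axis $a$ itself in the role of the left slot: if $u\in A_\gl(a)$ with $\gl\in\{0,\eta\}$, then in the decomposition $u=\gvp_a(u)a+u_0+u_\eta$ we have $u_0+u_\eta=u$ and the $a$-component is $\gvp_a(u)a=0$, so $(a,u)=\gvp_a(u)=0$; thus $a$ (spanning $A_1(a)$, by absolute primitivity) is orthogonal to $A_0(a)\oplus A_\eta(a)$. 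That handles all pairs involving the eigenvalue $1$.

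For the remaining pair $A_0(a)\perp A_\eta(a)$, the cleanest route is still the eigenvalue computation, but done through the form's definition rather than presupposing associativity. I would argue as follows: for $u\in A_0(a)$ and $w\in A_\eta(a)$, I want $(u,w)=0$. Extend $u$ and $w$ in the basis $\calb$; but a slicker approach is to first prove the associativity identity $(ab,c)=(a,bc)$ in the special case where the first argument is an $\eta$-axis, which follows directly from part~(1) of the previous lemma combined with the fusion rules: $(ab,c)=\gvp_{ab}(c)$ — no, $ab$ need not be an axis, so this does not immediately work. The honest fix is to establish the partial associativity $(a,uv)=(au,v)$ for $a$ an $\eta$-axis and $u,v\in A$ arbitrary, by writing $u=\gvp_a(u)a+u_0+u_\eta$ and expanding both sides using the fusion rules and part~(1) of the lemma. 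I expect \textbf{this partial associativity computation to be the main obstacle}: one must carefully track how $\gvp_a(\cdot)$ interacts with products of the eigencomponents, using that $A_1(a)=\ff a$, that $A_1A_0=0$, that $A_1A_1\subseteq A_1$, $A_0A_0\subseteq A_0$, $A_\eta^2\subseteq A_0+A_1$, and $(A_0+A_1)A_\eta\subseteq A_\eta$, to verify that both $(a,uv)$ and $(au,v)$ reduce to the same expression in the $\gvp$-values of $u$, $v$, and their components. Once that partial associativity is in hand, applying it with $u\in A_0(a)$, $w\in A_\eta(a)$ and computing $(au,w)=0=(a\cdot 0,w)$ versus $(au,w)=0\cdot(u,w)$... — rather, one applies it as $(u,aw)=(ua,w)$ giving $\eta(u,w)=0$, hence $(u,w)=0$ since $\eta\ne 0$. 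This completes all three cases.
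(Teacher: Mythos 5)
Your first half is exactly the paper's argument: since $(a,u)=\gvp_a(u)=0$ for $u\in A_0(a)+A_\eta(a)$, the line $A_1(a)=\ff a$ is orthogonal to the other two eigenspaces. The genuine gap is the remaining case $A_0(a)\perp A_\eta(a)$, which you never actually prove: you reduce it to a partial associativity $(a,uv)=(au,v)$ (equivalently, via symmetry and commutativity, $(u,av)=(ua,v)$) and explicitly leave that computation as ``the main obstacle.'' That obstacle is real and, as set up, circular. Writing $u=\gvp_a(u)a+u_0+u_\eta$ and $v=\gvp_a(v)a+v_0+v_\eta$ and expanding with the fusion rules and part (1) of the previous lemma, the middle-slot identity $(u,av)=(ua,v)$ reduces precisely to $(u_0,v_\eta)=(u_\eta,v_0)$, and the outer-slot identity $(a,uv)=(au,v)$ reduces to $\gvp_a(u_\eta v_\eta)=\eta(u_\eta,v_\eta)+\eta(u_\eta,v_0)$; in either form you need the very orthogonality you are trying to establish (plus, in the second form, a nontrivial identity relating $\gvp_a$ of a product of $\eta$-eigenvectors to the form), none of which is available at this stage. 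Indeed the paper's logic runs the other way: the associativity of the form (Theorem \ref{thm frob}) is deduced \emph{from} this lemma, and only ever with an axis in the middle slot, so attempting to prove associativity first inverts the intended order of deduction.

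The missing idea is part (3) of the preceding lemma, invariance of $(\cdot\,,\,\cdot)$ under automorphisms of $A$, applied to the Miyamoto involution $\tau_a$: for $u\in A_0(a)$ and $v\in A_\eta(a)$ one has $(u,v)=(u^{\tau_a},v^{\tau_a})=(u,-v)=-(u,v)$, hence $2(u,v)=0$ and $(u,v)=0$ because $\charc\,\ff\ne 2$. This one-line argument needs no associativity at all, which is exactly why the invariance statement was proved beforehand.
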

\begin{proof}
Clearly, if 
$u\in A_0(a)+A_\eta(a)$ then $(a,u)=\gvp_a(u)=0$. Hence $A_1(a)=\ff a$ is 
orthogonal to both $A_0(a)$ and $A_\eta(a)$. It remains to show that 
these two are also orthogonal to each other. Let $u\in A_0(a)$ 
and $v\in A_\eta(a)$, the fact that $(\cdot\, ,\, \cdot)$ is invariant under
$\tau_a$ gives us $(u,v)=(u^{\tau_a},v^{\tau_a})=(u,-v)=-(u,v)$. 
Clearly, this means that $(u,v)=0$.
\end{proof}
 
We are now ready to complete the proof that $(\cdot\, ,\, \cdot)$ associates 
with the algebra product. Note that the identity 
\[
(a,bc)=(ab,c)
\]
that we need to prove is linear in $a$, $b$, and $c$. In particular, 
since $A$ is spanned by $\eta$-axes, we may assume that $b$ is an $\eta$-axis. 
Furthermore, since $A$ decomposes as the sum of the eigenspaces of 
${\rm ad}_b$, we may assume that $a$ and $c$ are eigenvectors of ${\rm ad}_b$, 
say, for the eigenvalues $\mu$ and $\nu$. We have two cases:

\noindent
If  $\mu=\nu$ then 
\[
(a,bc)=(a,\nu c)=\nu(a,c)=\mu(a,c)=(\mu a,c)=(ba,c)=
(ab,c).
\] 
If $\mu\neq\nu$ then 
\[
(a,bc)=\nu(a,c)=0=\mu(a,c)=(ab,c),
\] 
since $A_\mu(b)$ and $A_\nu(b)$ are orthogonal to each other. 
Thus, in both cases we have the desired equality $(a,bc)=(ab,c)$, 
proving that the form $(\cdot\, ,\, \cdot)$ is Frobenius.
\end{proof}


\end{document}